\DeclareMathOperator{\trace}{tr}
\newcommand{\bfxi}[0]{\boldsymbol{\xi}}
\newcommand{\khat}[0]{\widehat{K}}
\newcommand{\keq}[0]{K_\triangle}
\newcommand{\krec}[0]{\khat}
\newcommand{\tri}[0]{\mathcal{T}}
\newcommand{\trieq}[0]{\mathcal{T}_{\triangle}}
\newcommand{\trirec}[0]{\mathcal{T}_{\khat}}
\newcommand{\deggeo}[0]{{q}}
\newcommand{\pdeggeo}[0]{\mathcal{P}^\deggeo}
\newcommand{\pgeo}[0]{\mathcal{P}^\deggeo}
\newcommand{\pun}[0]{\mathcal{P}^1}
\newcommand{\pdeux}[0]{\mathcal{P}^2}
\newcommand{\Feq}[0]{F_\triangle} 
\newcommand{\jeq}[0]{J_\triangle}
\newcommand{\FeqK}[0]{F_{\triangle \to K}}
\newcommand{\jeqK}[0]{J_{\triangle \to K}}
\newcommand{\bfe}[0]{\boldsymbol{e}}
\newcommand{\bfp}[0]{{\boldsymbol{p}}}
\newcommand{\bfx}[0]{{\boldsymbol{x}}}
\newcommand{\bfy}[0]{{\boldsymbol{y}}}
\newcommand{\ecal}[0]{\mathcal{E}}
\newcommand{\fcal}[0]{\mathcal{F}}
\newcommand{\hcal}[0]{\mathcal{H}}
\newcommand{\kcal}[0]{\mathcal{K}}
\newcommand{\mcal}[0]{\mathcal{M}}
\newcommand{\mcalp}[0]{\mathcal{M}_{\bfp}}
\newcommand{\ncal}[0]{\mathcal{N}}
\newcommand{\tcal}[0]{\mathcal{T}}
\newcommand{\R}[0]{\mathbb{R}^2}
\newcommand{\Run}[0]{\mathbb{R}}
\newcommand{\Rn}[0]{\mathbb{R}^n}
\newcommand{\Rtrois}[0]{\mathbb{R}^3}
\newcommand{\eucl}[0]{\overline{\mcal}}
\newcommand{\mcalmud}[0]{\mathcal{M}^{-1/2}}
\crefname{hypothesis}{Hypothesis}{Hypotheses}
\title{Isometric simplices for high-order anisotropic mesh adaptation. Part I: Definition and existence of isometric triangulations.
\thanks{Submitted to the editors DATE.
\funding{This work was funded by XXX.}}}
\author{Arthur Bawin\thanks{Institute of Mechanics, Materials and Civil Engineering (iMMC), UCLouvain, Belgium}
\and André Garon\thanks{Département de Génie Mécanique, Polytechnique Montréal, Montréal QC, Canada (\email{arthur.bawin@uclouvain.be}, \email{andre.garon@polymtl.ca}, \email{jean-francois.remacle@uclouvain.be})}
\and Jean-François Remacle\footnotemark[2]}
\begin{document}
\nolinenumbers

\maketitle

\begin{abstract}
  Anisotropic mesh adaptation with Riemannian metrics has proven effective for generating straight-sided meshes with anisotropy induced by the geometry of interest and/or the resolved physics.
  Within the continuous mesh framework, anisotropic meshes are thought of as discrete counterparts to Riemannian metrics.
  Ideal, or \emph{unit}, simplicial meshes consist only of simplices whose edges exhibit unit or quasi-unit length with respect to a given Riemannian metric.
  Recently, mesh adaptation with high-order (i.e., curved) elements has grown in popularity in the meshing community,
  as the additional flexibility of high-order elements can further reduce the approximation error.
  However, a complete and competitive methodology for anisotropic and high-order mesh adaptation is not yet available.
  The goal of this paper is to address a key aspect of metric-based high-order mesh adaptation,
  namely, the adequacy between a Riemannian metric and high-order simplices.
  This is done by extending the notions of \emph{unit simplices} and \emph{unit meshes},
  central to the continuous mesh framework,
  to high-order elements.
  The existing definitions of a unit simplex are reviewed, then a broader definition involving Riemannian isometries is introduced to handle curved and high-order simplices.
  Similarly, the notion of \emph{quasi-unitness} is extended to curved simplices to tackle the practical generation of high-order meshes.
  Proofs of concept for unit and (quasi-)isometric meshes are presented in two dimensions.

\end{abstract}

\begin{keywords}
Anisotropic mesh adaptation, high-order mesh, continuous mesh, Riemannian metric, curvilinear mesh
\end{keywords}

\begin{MSCcodes}
65L50, 65N50, 53Z05
\end{MSCcodes}

\section{Introduction}

Anisotropic mesh adaptation has proven effective to provide high-quality numerical results for a reduced computational cost.
Anisotropic meshes tailored for applications with geometry- or physics-induced anisotropy (e.g., shocks) can exhibit orders of magnitude fewer degrees of freedom compared to uniform or isotropic meshes, achieving equivalent levels of accuracy.
Although the majority of studies on anisotropic mesh adaptation are concerned with linear (i.e., straight-sided) meshes,
significant research in the past decade has been dedicated to adaptivity with high-order (i.e., curved) meshes.
High-order mesh adaptation takes advantage of the additional degrees of freedom by placing curved elements not only near the CAD boundaries,
but also inside the computational domain \cite{zhang2018curvilinear, aparicio2019defining, zhang2021generation, bawin2023optimally}.
This way, the CAD model and the resolved fields can each be represented with better accuracy.

In \emph{metric-based} anisotropic mesh adaptation and, particularly within the so-called \emph{continuous mesh framework},
the process is driven by a Riemannian metric. This metric translates an error indicator into an anisotropic size field,
which is in turn converted into a discrete mesh.
Straight-sided simplicial meshes
are typically generated
by constructing edges of quasi-unit length with respect to this metric.
The ideal, or \emph{unit}, simplices \cite{loseille2008adaptation, loseille2011continuous, loseille2011continuous2}
are thus equilateral in this metric.
This edge-based criterion facilitates the generation of metric-conforming meshes,
by splitting and collapsing mesh edges to meet the specified length requirements.
However, for curved elements, an edge-based characterization of unitness is not restrictive enough.
Prescribing edge lengths alone fails to guarantee that essential invariants, such
as the area and inner angles of the simplex, or the energy of its edges, are shared among unit simplices.
To address this limitation, unitness has been redefined using the Jacobian matrix of the transformation
from the reference finite element simplex of a given simplex \cite{rocheryThesis, rochery2024generalized}.
This approach is consistent with the former definition for linear mesh elements,
and introduces on high-order simplices a pointwise constraint to ensure conformity with the ideal equilateral simplex.
However, enforcing this criterion in practice is
computationally expensive, as the edge-based condition becomes a pointwise relation to be satisfied in the whole mesh.

In this paper,
unit simplices are defined as the images of an ideal simplex by Riemannian isometries.
This definition generalizes existing notions of unitness and applies to both linear and high-order simplices.
Riemannian isometries, as metric isometries, preserve lengths, angles and areas,
but also the metric tensor between two manifolds.
Consequently,
pointwise conformity between the ideal simplex and its image is characterized by the Jacobian matrix of the isometry in coordinates,
recovering the definition from \cite{rocheryThesis, rochery2024generalized}.
Since Riemannian isometries map geodesic curves to geodesic curves,
high-order isometric simplices naturally have geodesic edges in the physical space.
This property yields a practical edge-based criterion for generating curved meshes tailored to an input metric.

The paper is organized as follows.
The adopted notations are detailed in \Cref{sec:simplices},
while the essentials of metric-based anisotropic mesh adaptation are recalled in \Cref{sec:metrics}.
\Cref{sec:prop_unit_simplices} reviews the current definitions of unit simplices.
\Cref{sec:isometric_simplices} presents the main contribution of this work:
unit simplices are extended to high-order simplices through Riemannian isometries,
and the existence of isometric triangulations is discussed.
Two definitions of quasi-unitness are proposed for the practical generation of high-order, metric-conforming triangulations.
Finally, \Cref{sec:examples} provides examples of triangulations that are isometric and quasi-isometric to a regular tiling of the plane.

\section{Reference, ideal and physical simplices}
\label{sec:simplices}

We introduce here the simplices used throughout the paper:
the \emph{reference (right) simplex}, typically the support for finite element computations;
the \emph{ideal simplex}, which serves as the target for most mesh adaptation methods;
and the arbitrary \emph{physical simplex}, which represents a generic mesh element.

The \emph{reference simplex} $\khat$ is defined in dimension $n$ using the reference coordinates $\bfxi = (\xi^1, \ldots, \xi^n)$ by the set:
\begin{equation}\label{eq:ref_simplex}
  \khat \triangleq \left\{ \bfxi \in [0,1]^n ~ \biggr| ~ \sum_{i = 1}^n \xi^i \leq 1 \right\}.
\end{equation}
The reference simplex in one dimension is the interval $[0,1]$ (although it is also often set to be $[-1,1]$), the right triangle with vertices $(0,0)$, $(1,0)$, $(0,1)$ in two dimensions,
and the trirectangular tetrahedron with vertices $(0,0,0)$, $(1,0,0)$, $(0,1,0)$, $(0,0,1)$ in three dimensions.

Most anisotropic mesh adaptation methods and meshing libraries (see for instance \cite{dobrzynski2012mmg3d}) aim to generate equilateral simplices in the space deformed by a Riemannian metric.
We therefore define the \emph{ideal simplex} $\keq$ as an equilateral simplex with edges of unit Euclidean length.
In one dimension, the ideal simplex is a unit interval and coincides with the reference simplex.
In two dimensions, $\keq$ can be represented by the equilateral triangle with vertices $(0,0), (1,0), (1/2,\sqrt{3}/2)$.
It is standard to have both the reference and ideal simplices live in a so-called reference space $\widehat{\Omega}$,
which we identify to a subset of $\R$ equipped with the Euclidean metric.

Lastly, we call \emph{physical simplex} an element $K$ of a simplicial mesh of a computational domain $\Omega \subset \mathbb{R}^n$.
In this paper, we consider only physical simplices described by a polynomial reference-to-physical transformation
$F_K : \khat \to K$.
Lagrange and Bézier bases are commonly used to describe this transformation:
the Lagrange control points correspond to the mesh vertices $\bfx_j$, while the Bézier control points
are not required to lie inside the simplex or on its boundary.
We choose here to represent the transformation $F_K$ with Lagrange polynomials $\phi_j(\bfxi)$, where $1 \leq j \leq N_q$ and
$$N_q = \frac{(q+n)!}{q!n!}$$ is the dimension of the basis of polynomials of degree $q$ in dimension $n$.
A reference-to-physical transformation of degree $q$ thus writes for $\bfxi \in \khat$:
\begin{equation}\label{eq:reftophys}
  F_K \triangleq F_{\khat \to K} : \khat \to K: \bfxi \mapsto F_K(\bfxi) = \sum_{j = 1}^{N_q} \phi_j(\bfxi)\bfx_j,
\end{equation}
and we call $\pdeggeo$ \emph{simplex} the (possibly curved) simplex $K = F_K(\khat)$.
The Jacobian matrix of $F_K$ is the matrix $J_K$ defined by:
\begin{equation}
  J_{K,ij} = \frac{\partial F_K^i}{\partial \xi^j}.
\end{equation}
Although this matrix is $n \times d$ in general, with $d$ the dimension of the simplex,
we restrict our attention to simplices whose dimension matches that of their ambient space,
such as triangles in $\R$, for which $J_K$ is an $n \times n$ matrix.
The linear transformation from the reference simplex $\khat$ to the ideal simplex $\keq$ is noted $\Feq$,
and its constant Jacobian matrix is noted $\jeq$, as shown in \Cref{fig:ref_to_phys}.
Similarly,
the transformation and Jacobian matrix from $\keq$ to the physical simplex $K$ are noted $\FeqK$ and $\jeqK$,
respectively.

\begin{figure}[tbhp]
  \centering
  \includegraphics[width=0.8\linewidth]{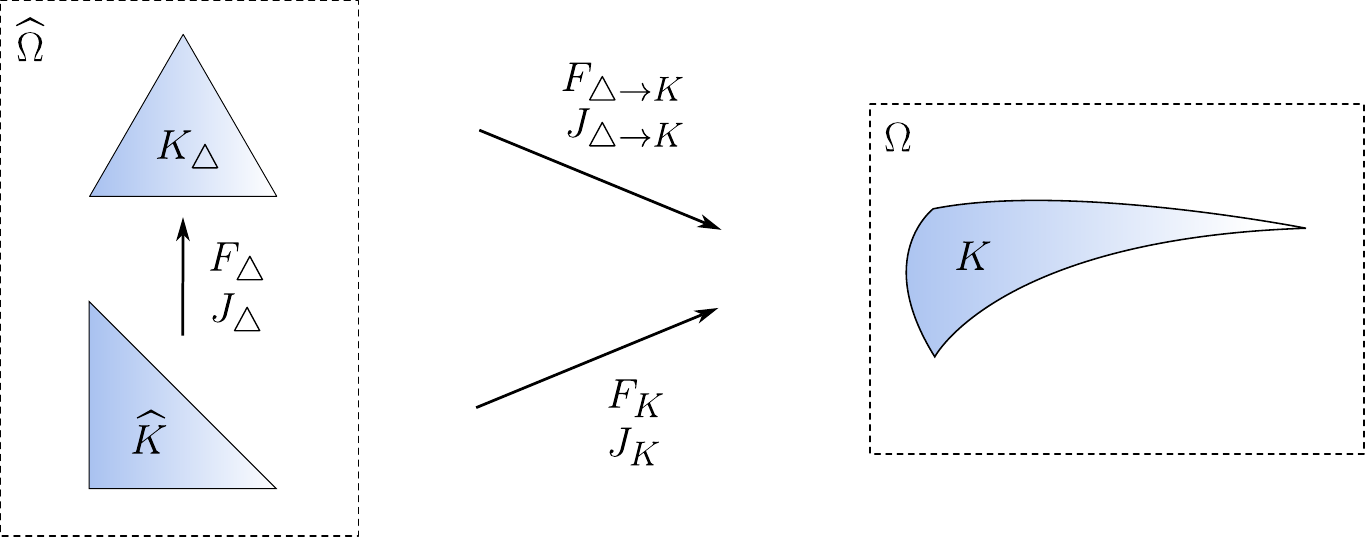}
  \caption{Notation for the transformation between the reference, ideal and physical simplices.}
  \label{fig:ref_to_phys}
\end{figure}

\section{Riemannian metrics and metric-based mesh adaptation}
\label{sec:metrics}

This paper builds on the continuous mesh framework \cite{loseille2008adaptation, loseille2011continuous, loseille2011continuous2},
which uses Riemannian metrics to define angles and lengths for mesh adaptation.
Here, we briefly recall key properties of Riemannian metrics,
namely, how they modify the computation of angles, lengths and volumes,
as well as their role within the continuous mesh framework.
Isometries, which are needed to define curved unit simplices in \Cref{sec:isometric_simplices}, are also recalled.
For a more complete treatment of Riemannian metrics and Riemannian manifolds, we refer the reader to e.g. \cite{lee2012smooth, lee2018introduction}.

Let $M$ be a smooth manifold of dimension $n$ (also called a smooth $n$-manifold) and let $x^i$ denote coordinates on $M$.
A (Riemannian) metric on $M$ is a smooth covariant 2-tensor field $\mcal = \mcal_{ij} dx^idx^j$ that is symmetric and positive-definite everywhere on $M$. In other words, it is a field of smoothly varying bilinear forms on $M$.
The matrix of components of $\mcal$ is noted $[\mcal] \triangleq \mcal_{ij}$, and is also symmetric and positive-definite.
The pair $(M,\mcal)$ of the smooth manifold $M$ equipped with the metric $\mcal$ forms a Riemannian manifold.
In the expression of $\mcal$, the coordinate differentials $dx^k$ are linear functionals taking a tangent vector and returning its $k$-th component.
At a point $\bfp \in M$,
the metric tensor $\mcalp$ takes two tangent vectors $u, v$ in the tangent space to $M$ at $\bfp$, denoted $T_\bfp M$,
and returns their inner product $\langle .,.\rangle_{\mcalp}$:
\begin{equation}
  \mcalp(u,v) = \mcal_{\bfp,ij}dx^i(u)dx^j(v) = \mcal_{\bfp,ij} u^i v^j = \langle u,v\rangle_{\mcalp}.
\end{equation}
Letting $u,v$ also denote the column vectors of their coordinates,
the inner product of $u$ and $v$ with respect to $\mcalp$ is given by the matrix product $u^T[\mcalp]v$.
The simplest example of Riemannian metric is the Euclidean metric $\eucl$,
whose components in standard coordinates are $\eucl_{ij} = \delta_{ij}$.
The associated inner product is the canonical inner product of $\Rn$, as $\langle u,v\rangle_{\eucl} = \delta_{ij} u^i v^j = \sum_{i= 1}^n u^iv^i$.

Non-Euclidean metrics modify the computation of the following useful quantities in mesh generation.
The \emph{norm} of a tangent vector with respect to the metric tensor $\mcalp$ is defined by $\Vert u \Vert_{\mcalp} \triangleq \sqrt{\langle u,u\rangle_{\mcalp}}$,
and the \emph{angle} formed by the vectors $u,v$ with respect to  $\mcalp$ is the unique real number $\theta \in [0,\pi]$ such that:
\begin{equation}\label{eq:cos_angle}
  \cos \theta = \frac{\langle u,v\rangle_{\mcalp}}{\Vert u \Vert_{\mcalp}\Vert v \Vert_{\mcalp}}.
\end{equation}
The \emph{length} with respect to $\mcal$ of a curve $\gamma$ on $M$ parameterized by $t \in [a,b]$ is:
\begin{equation}
  \ell_{\mcal}(\gamma) = \int_a^b \Vert \gamma'(t) \Vert_{\mcal}\,dt = \int_a^b \sqrt{\left \langle \gamma'(t), \gamma'(t) \right\rangle_{\mcal}} \,dt.
  \label{eq:length_in_metric}
\end{equation}
For arbitrary curves and metrics,
lengths are typically evaluated using numerical quadrature.
In anisotropic meshing, for instance,
the curves of interest are straight lines in $\Rn$, such as $\gamma = \bfx_0\bfx_1$,
simplifying the length computation to:
\begin{equation}
  \begin{aligned}
  \ell_{\mcal}(\bfx_0\bfx_1) &= \int_0^1 \Vert \bfx_1-\bfx_0 \Vert_{\mcal}\,dt\\
  &= \int_0^1 \sqrt{\left\langle \bfx_1-\bfx_0,\bfx_1-\bfx_0\right\rangle_{\mcal}} \,dt\\
  &= \int_0^1 \sqrt{(\bfx_1-\bfx_0)^T [\mcal]_{\left(\bfx_0 + t(\bfx_1-\bfx_0)\right)}(\bfx_1-\bfx_0)} \,dt.
\end{aligned}
\end{equation}
This integral is evaluated numerically to compute the length of the segment $\bfx_0\bfx_1$ with respect to the varying metric $\mcal$.
If the metric is constant on $M$, then the integral further simplifies to:
\begin{equation}
  \ell_{\mcal}(\bfx_0\bfx_1) = \int_0^1 \sqrt{(\bfx_1-\bfx_0)^T [\mcal] (\bfx_1-\bfx_0)} \,dt = \sqrt{(\bfx_1-\bfx_0)^T [\mcal] (\bfx_1-\bfx_0)}.
\end{equation}
In the continuous mesh framework,
a simplex is said to be \emph{unit} if all of its edges
have unit length with respect to the Riemannian metric.
When the metric is chosen to minimize a given error functional,
unit simplices with respect to this metric are endowed with the same error level,
yielding error-minimizing meshes.
The goal of metric-based mesh adaptation is thus to generate
meshes consisting exclusively of such unit simplices, called \emph{unit meshes}.
As unit meshes do not generally exist for arbitrary Riemannian metrics,
a relaxed definition of unitness is adopted:
\emph{quasi-unit} simplices have edges of length in $[1/\sqrt{2},\sqrt{2}]$, to allow for the practical generation of quasi-unit meshes \cite{loseille2008adaptation}.

The \emph{energy} of a curve $\gamma$ is defined analogously to the length by:
\begin{equation}
  E_{\mcal}(\gamma) = \frac{1}{2}\int_a^b \Vert \gamma'(t) \Vert_{\mcal}^2\,dt = \frac{1}{2}\int_a^b \left \langle \gamma'(t), \gamma'(t) \right\rangle_{\mcal} \,dt.
  \label{eq:energy_in_metric}
\end{equation}

The \emph{volume} of a simplex $K$ in a Riemannian manifold $(M,\mcal)$ can be computed on the reference simplex as:
\begin{equation}
  |K|_\mcal = \int_{\khat} \sqrt{\det [\mcal]} (\det J_F)\,d\bfxi.
\end{equation}
For example, in the Euclidean space $(\Rn,\eucl)$,
the volume of an arbitrary simplex $K$ can be computed on the reference element as $|K|_{\eucl} = \int_{\khat} \det J_F \,d\bfxi$.
On the manifold $(\Rn,\mcal)$ with constant metric $\mcal$, the volume of $K$ satisfies $|K|_\mcal = \sqrt{\det [\mcal]} |K|_{\eucl}$.

We conclude this section by recalling the notions of \emph{pullback metric} and \emph{isometries},
which are needed to define isometric unit elements.
If $(M,\mcal)$ and $(N,\ncal)$ are Riemannian manifolds and $F : M \to N$ is a smooth map between them,
then the \emph{pullback} on $M$ of the metric $\ncal$, noted $F^*\ncal$, is defined by the pointwise relation at any $\bfp \in M$:
\begin{equation}
  (F^*\ncal)_\bfp(u, v) = \ncal_{F(\bfp)}(dF_\bfp(u),  dF_\bfp(v)),
\end{equation}
where $u, v \in T_\bfp M$ are tangent vectors and $dF_\bfp(u), dF_\bfp(v)$ are their images in $T_{F(\bfp)}N$ by the action of the \emph{differential} $dF_\bfp : T_\bfp M \to T_{F(\bfp)}N$,
which is a linear map between tangent spaces.
The pullback metric thus evaluates the original metric at the images of its arguments.
Given coordinates $x^i$ on $M$ and $y^j$ on $N$,
the coordinate representation of the differential is the Jacobian matrix $J_{F,ij} = \partial F^i/\partial x^j$.
The component matrix of the pullback metric is then given by:
\begin{equation} \label{eq:matrix_pullback}
  [F^*\ncal] = J_F^T[\ncal]J_F.
\end{equation}
If $F$ is a diffeomorphism (i.e., if it also has a smooth inverse) and satisfies $\mcal = F^*\ncal$,
then $F$ is called a \emph{(Riemannian) isometry} between $(M,\mcal)$ and $(N,\ncal)$.
In other words, the isometry $F$ preserves the metric from $M$ to $N$.
As a result, angles, lengths, and volumes on isometric manifolds $(M,\mcal)$ and $(N,\ncal)$ are measured in the same way.

\section{Current definitions of unit simplices}
\label{sec:prop_unit_simplices}
Unit simplices play a central role in the continuous mesh framework.
Indeed, unit simplices with respect to an error-minimizing metric
are endowed with the same error and yield error-minimizing triangulations.
These triangulations are the discrete counterparts of the error-minimizing metric and form an equivalence class,
whose equivalence relation is to consist of unit simplices with respect to the metric.
In this section, we summarize the current definitions of both linear and curved unit simplices,
and review their properties and limitations.

\subsection{Linear simplices} As mentioned in the previous section,
linear, or straight-sided, simplices are said to be \emph{unit}
if their edges have unit length with respect to an input metric.
Formally:
\begin{definition} (Linear unit simplex) \label{def:linear_unit}
  Let $(\Omega,\mcal)$ with $\Omega \subset \Rn$ be a Riemannian $n$-manifold with constant metric.
  A linear $n$-simplex $K \subseteq \Omega$ with edges $e_i$, $1 \leq i \leq N_e \triangleq n(n+1)/2$, is \emph{unit} with respect to $\mcal$ if $\ell_\mcal(e_i) = 1$,
  that is, if all its edges have unit length with respect to $\mcal$.
\end{definition}

The set of linear unit simplices for a constant metric $\mcal$ is explicitly given by \cite{loseille2008adaptation}:
\begin{equation}\label{eq:set_linear_unit_simplices}
  \kcal_\mcal \triangleq \left\{ K = \mcalmud R \keq + \bfx ~ | ~ R \in \text{SO}(n), ~\bfx \in \Rn \right\}.
\end{equation}
That is,
they are, up to a translation, the image of $\keq$ under an arbitrary rotation $R$ followed by the linear transformation $\mcalmud \triangleq P\Lambda^{-1/2}P^T,$
where $[\mcal] = P\Lambda P^T$ is the diagonalized form of the component matrix of $\mcal$.
Linear unit simplices share the following properties \cite{loseille2008adaptation, rocheryThesis}:

\begin{proposition}\label{prop:prop_linear_unit}
Let $K$ be a linear $n$-simplex unit for the constant metric $\mcal$.
\begin{enumerate}
  \item If $n > 1$, the inner angles of $K$ are each $\pi/3$;
  \item $|K|_\mcal = |\keq|_{\eucl}$; 
  \item Let $H \in \Run^{n \times n}$ be a symmetric matrix. Then:
  \begin{equation}
    \sum_{i=1}^{N_e} e_i^T H e_i = \frac{n+1}{2} \trace \left(\mcalmud H \mcalmud\right).
  \end{equation}
  \item There is a rotation matrix $R$ such that the constant Jacobian matrix of their reference-to-physical transformation $J_K = J_{\khat \to K}$ is:
  \begin{equation}
    J_K = \mcalmud R \jeq.
    \label{eq:jacobian_unit_linear}
  \end{equation}
\end{enumerate}
\end{proposition}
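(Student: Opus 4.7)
The plan is to derive all four properties directly from the explicit characterization $K = \mcalmud R \keq + \bfx$ provided in \eqref{eq:set_linear_unit_simplices}. Each claim then reduces to a computation on the ideal equilateral simplex $\keq$ via two elementary algebraic facts: the symmetric square-root identity $\mcalmud [\mcal] \mcalmud = I$, which follows immediately from the spectral decomposition $[\mcal] = P\Lambda P^T$, and the orthogonality $R^T R = I$ of the rotation factor.

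Property (4) is essentially by construction: the reference-to-physical map factors as $F_K(\bfxi) = \mcalmud R\, \Feq(\bfxi) + \bfx$, so the chain rule yields $J_K = \mcalmud R \jeq$. For (1) and (2), I would substitute each edge of $K$ in the form $e_i = \mcalmud R\, \tilde e_i$, where $\tilde e_i$ is the corresponding edge of $\keq$. The two algebraic identities above then give $e_i^T [\mcal] e_j = \tilde e_i^T \tilde e_j$ for all $i,j$, so lengths and angles measured by $\mcal$ on $K$ coincide with their Euclidean counterparts on $\keq$. Since $\keq$ is equilateral with unit Euclidean edges, property (1) follows immediately for $n > 1$. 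Property (2) follows by combining the constant-metric identity $|K|_\mcal = \sqrt{\det[\mcal]}\,|K|_{\eucl}$ with the change of variables $|K|_{\eucl} = |\det(\mcalmud R)|\,|\keq|_{\eucl} = (\det[\mcal])^{-1/2}\,|\keq|_{\eucl}$, in which the two determinants cancel.

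Property (3) is the main obstacle. After the substitution $e_i = \mcalmud R\, \tilde e_i$, the left-hand side becomes $\sum_i \tilde e_i^T (R^T \mcalmud H \mcalmud R)\, \tilde e_i$, and cyclicity of the trace gives $\trace(R^T \mcalmud H \mcalmud R) = \trace(\mcalmud H \mcalmud)$. The claim therefore reduces to the purely geometric identity
\begin{equation*}
  \sum_{i=1}^{N_e} \tilde e_i\, \tilde e_i^T \;=\; \frac{n+1}{2}\, I_n
\end{equation*}
for the unit-edge equilateral $n$-simplex, since once this holds, contracting with $\mcalmud H \mcalmud$ and using $\trace(AB) = \trace(BA)$ once more produces the advertised right-hand side. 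I would establish this last identity by placing $\keq$ with centroid at the origin and vertices $v_0, \ldots, v_n$ satisfying $\sum_k v_k = \bfzero$; expanding $\sum_{i<j}(v_i - v_j)(v_i - v_j)^T$ then collapses to $(n+1)\sum_k v_k v_k^T$, and the circumradius relation $\Vert v_k\Vert^2 = n/(2(n+1))$ (forced by unit edge length and the centroid constraint), combined with the observation that the symmetry group of $\keq$ acts irreducibly on $\Rn$ and must leave $\sum_k v_k v_k^T$ invariant, yields $\sum_k v_k v_k^T = \tfrac{1}{2} I_n$ by Schur's lemma.
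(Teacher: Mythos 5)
Your proposal is correct. Note that the paper does not actually prove \Cref{prop:prop_linear_unit}; its ``proof'' is a pointer to \cite{loseille2008adaptation, loseille2011continuous, rocheryThesis}, so there is no in-text argument to compare against, and your write-up supplies the missing details. Everything does reduce, as you say, to the characterization \eqref{eq:set_linear_unit_simplices} together with the two identities $\mcalmud[\mcal]\mcalmud = I$ and $R^TR = I$: property (4) is the chain rule, properties (1) and (2) follow from $e_i^T[\mcal]e_j = \tilde e_i^T\tilde e_j$ and the determinant cancellation $|K|_\mcal = \sqrt{\det[\mcal]}\,\det(\mcalmud)\,|\keq|_{\eucl}$. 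The only step requiring real work is the moment identity $\sum_{i=1}^{N_e}\tilde e_i\tilde e_i^T = \frac{n+1}{2}I_n$, and your derivation checks out: with $\sum_k v_k = \bfzero$ the cross terms in $\sum_{i<j}(v_i-v_j)(v_i-v_j)^T$ contribute $+\sum_k v_kv_k^T$ because $\sum_{i\neq j}v_iv_j^T = -\sum_k v_kv_k^T$, giving $(n+1)\sum_k v_kv_k^T$ in total; the symmetry group of $\keq$ acts on $\Rn$ by the standard (irreducible) representation of $S_{n+1}$, so the invariant symmetric matrix $\sum_k v_kv_k^T$ is scalar, and the trace $\sum_k\Vert v_k\Vert^2 = (n+1)\cdot\frac{n}{2(n+1)} = n/2$ pins the scalar to $1/2$. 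The Schur's-lemma step could be replaced by an explicit coordinate computation for $\keq$, but your version is cleaner and dimension-independent; it is a legitimate, self-contained proof of the cited result.
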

\begin{proof}
  See \cite{loseille2008adaptation, loseille2011continuous, rocheryThesis}.
\end{proof}
The first two invariants simply state that unit simplices have the same inner angles and area.
When $H$ is positive-definite, the third invariant translates the fact that the energy of the edges with respect to the metric tensor $\hcal$ with component matrix $H$, is an invariant.
This is used in e.g. \cite{loseille2008adaptation, loseille2011continuous, coulaud2016very, rochery2024generalized, bawin2024metric}
to derive interpolation error estimates on unit simplices, with $H$ representing the Hessian or higher-order derivatives of the field of interest.
The fourth invariant follows directly from the characterization \eqref{eq:set_linear_unit_simplices},
but is far-reaching nonetheless and can be readily generalized to nonlinear simplices and nonconstant metrics \cite{rocheryThesis, rochery2024generalized}, see the next section.
Since $R$ and $\jeq$ are constant matrices, $J_K$ is constant if and only if the metric is constant.
Thus, constant metrics always yield linear unit simplices,
and, conversely, linear simplices are unit only for a constant metric.

\subsection{Curved simplices} \label{sec:curvedSimplices}
When considering curved (e.g., $\pdeggeo$) simplices,
Definition \ref{def:linear_unit} becomes insufficient, as it only prescribes the global edge length.
For linear simplices, two vertices are connected by a straight line with a linear parameterization,
and a definition based solely on the edges ensures that Proposition \ref{prop:prop_linear_unit} holds for all unit simplices.
In contrast, the vertices of a curved simplex may be connected by arbitrary smooth curves.
An edge-based definition alone is inadequate,
as any nontrivially curved triangle with unit edges under a constant metric has a nonconstant Jacobian matrix, thus violating \eqref{eq:jacobian_unit_linear}.
To generalize unitness to curved simplices,
it was recently proposed to adopt \eqref{eq:jacobian_unit_linear} as a definition rather than as an invariant \cite{rocheryThesis, rochery2024generalized}:
\begin{definition} (Curved unit simplex) \label{def:unit_curved_triangle_jacobian}
  Let $(\Omega,\mcal)$ with $\Omega \subset \Rn$ be a Riemannian $n$-manifold.
  A curved $n$-simplex $K \subseteq \Omega$ is unit with respect to $\mcal$ if there is a rotation matrix $R$ such that for any $\bfxi \in \khat$:
  \begin{equation}
    J_K(\bfxi) = \mcalmud(F_K(\bfxi)) R \jeq,
    \label{eq:unit_curved_triangle_jacobian}
  \end{equation}
  and if $\mcal$ is such that $F_K$ is a smooth function of $\bfxi$.
\end{definition}
This defines a unit simplex in terms of a pointwise relation between the metric and the Jacobian matrix $J_K$.
Since $F_K$ is smooth,
the metric must exhibit the required symmetries so that $F_K$ has commuting partial derivatives of arbitrary order.
These symmetries are discussed in \cite{rochery2024generalized} for the special case of quadratic ($\pdeux$) simplices.
Because $J_K$ appears twice in the computation of edge lengths in \eqref{eq:length_in_metric},
one checks that simplices described by Definition \ref{def:unit_curved_triangle_jacobian} have edges of unit length with respect to $\mcal$,
and are thus also unit in the sense of Definition \ref{def:linear_unit},
see \cite{bawin2024metric}.

Definition \ref{def:linear_unit} is appealing for practical mesh generation,
as it defines unitness through a local edge-based criterion.
This enables fast algorithms that rely solely on edge splits and collapses
until all mesh edges have lengths within $[1/\sqrt{2},\sqrt{2}]$.
In contrast, Definition \ref{def:unit_curved_triangle_jacobian} imposes a pointwise constraint on curved simplices,
without explicitly specifying the edge parameterization.

In \cite{aparicio2019defining, aparicio2022high}, curved triangulations in two and three dimensions are obtained from a linear mesh
by curving the edges to minimize an elementwise measure of distortion with respect to $\keq$.
Simplices of minimal distortion satisfy $J_K = c\mcalmud R J_\triangle$ with $c > 0$,
meaning they are conformal transformations of $\keq$ and have controlled inner angles.
The appropriate scaling of $J_K$ is achieved by controlling the edge lengths with respect to $\mcal$ on the underlying linear mesh,
that is, by curving a linear anisotropic mesh adapted to $\mcal$ in the usual sense.
The edge curving step, however, requires minimizing a global mesh distortion,
which is a coupled and expensive problem.
In this regard, an edge-based criterion for unitness would be particularly beneficial to make high-order mesh adaptation tractable for large-scale applications.

Alternatively,
a promising local edge curving scheme was proposed in \cite{rochery2024generalized},
by decoupling the linear and high-order parts of the mesh elements.

\section{Isometric simplices and meshes}
\label{sec:isometric_simplices}
We now propose a general definition of unitness as the image of an ideal simplex by a Riemannian isometry.
This definition includes Definitions \ref{def:linear_unit} and \ref{def:unit_curved_triangle_jacobian} as special cases,
with their properties recovered from the local length-preserving nature of isometries.
Simplices that are isometric to a linear simplex in the reference space have geodesic edges,
yielding an edge-based criterion for the generation of curved unit simplices.
Unit meshes, the targets of metric-based adaptation,
are defined as the image of a uniform tiling of $\Rn$ under an isometry.
Isometric simplices and meshes exist only if there is an isometry between the Euclidean space
and the physical space equipped with the Riemannian metric of interest.
This is possible only if the metric has zero curvature, as curvature is an invariant of isometric manifolds.
We discuss the existence of isometric meshes on such manifolds,
then introduce two relaxed definitions for quasi-unitness.
These definitions can be used to generate quasi-isometric meshes for practical applications with metrics with nonzero curvature.

\subsection{Isometric unit simplices}

Linear unit simplices preserve the edge lengths of the ideal simplex $K_\triangle$,
while \cref{def:unit_curved_triangle_jacobian} describes a curved unit simplex through its Jacobian matrix $J_K = \mcalmud R J_\triangle = J_{\triangle \to K}J_\triangle$.
Because $J_{\triangle \to K}$ satisfies \eqref{eq:matrix_pullback} for $[F^*\mcal] = I = [\eucl]$,
that is, because it is associated to a transformation that pulls the metric $\mcal$ to the Euclidean metric,
we are naturally led to the following definition of a unit simplex:

\begin{definition} \label{def:unit_isometry}
  Let $(\widehat{\Omega},\eucl)$, $\widehat{\Omega} \subset \Rn$, and $(M,\mcal)$ be Riemannian $n$-manifolds, and let $K_0 \subset \widehat{\Omega}$ be a positively oriented linear $n$-simplex.
  A curved $n$-simplex $K \subseteq M$ is unit with respect to both $\mcal$ and $K_0$ if it is isometric to $K_0$,
  that is, if it is the image $K = F(K_0)$
  for some 
  orientation-preserving isometry $F:\widehat{\Omega} \to M$.
\end{definition}

A unit simplex $K$ with respect to $\mcal$ and some simplex $K_0$ in the reference space is thus simply the image of $K_0$ by some isometry $F$,
provided such an isometry between $(\widehat{\Omega},\eucl)$ and $(M,\mcal)$ exists, which is discussed hereafter.
Note that $F$ should preserve the orientation, so as not to invert the simplex $K$, making it invalid (in the sense of high-order finite elements).
Before discussing this definition, we give the main properties of unit simplices defined by an isometry:

\begin{proposition} \label{prop:properties_isometric_elements}
Let $K = F(K_0)$ be unit with respect to $\mcal$ and $K_0$. 
\begin{enumerate}
  \item $K$ is positively oriented. In particular, if $K$ is a $\pgeo$ simplex, it is valid everywhere.
  \item The inner angles and area of $K_0$, as well as the lengths and energies of its edges, are preserved in $K$.
  \item $K$ is a geodesic simplex in $(M,\mcal)$.
  \item There is a rotation matrix $R$ such that the Jacobian matrix of $F_K$ writes for any $\bfxi \in \khat$:
  \begin{equation}
    J_K(\bfxi) = \mcalmud(F_K(\bfxi)) R J_{\khat \to K_0}.
  \end{equation}
  \item Let $\eta_{K_0}(\bfy)$ be the pointwise distortion of $K$ with respect to $K_0$ defined for $\bfy \in K_0$ by \cite{aparicio2019defining}:
  \begin{equation}
    \eta_{K_0}(\bfy) \triangleq \frac{1}{n} \frac{\trace \,\left(J_{K_0 \to K}^T(\bfy) \,[\mcal_{F(\bfy)}]\, J_{K_0 \to K}(\bfy)\right) }{\left[\det \left(J_{K_0 \to K}^T(\bfy) \,[\mcal_{F(\bfy)}]\, J_{K_0 \to K}(\bfy)\right)\right]^{1/n}} \geq 1.
    \label{eq:distortionK0}
  \end{equation}
  Then $\eta_{K_0}(\bfy) = 1$ for all $\bfy \in K_0$.
\end{enumerate}
\end{proposition}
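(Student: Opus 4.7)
The plan is to unpack the defining property that $F$ is an orientation-preserving Riemannian isometry — that is, $F^*\mcal = \eucl$ with $\det dF > 0$ pointwise — and derive each of the five items from it. I would prove (1) and (4) first, since the remaining three are direct consequences of the isometric structure and the Jacobian factorization. For (1), orientation is immediate: $F_K = F \circ F_{\khat \to K_0}$ is a composition of two orientation-preserving smooth maps, so $\det J_K(\bfxi) > 0$ for every $\bfxi \in \khat$, which is exactly the validity condition for a $\pgeo$ simplex. For (4), the pullback identity $F^*\mcal = \eucl$ reads in coordinates, via \eqref{eq:matrix_pullback}, as $J_F(\bfy)^T [\mcal](F(\bfy)) J_F(\bfy) = I$. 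Hence $R(\bfy) \triangleq [\mcal]^{1/2}(F(\bfy)) J_F(\bfy)$ is orthogonal, and orientation-preservation promotes it to $SO(n)$, giving $J_F(\bfy) = \mcalmud(F(\bfy)) R(\bfy)$. Composing with $F_{\khat \to K_0}$, whose Jacobian is the constant matrix $J_{\khat \to K_0}$, yields the claimed formula.

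For (2), I would use the length and energy formulas \eqref{eq:length_in_metric} and \eqref{eq:energy_in_metric}: if $\gamma:[a,b] \to K_0$ parameterizes an edge, then $F \circ \gamma$ parameterizes the image edge in $K$, and the isometric identity $\langle (F\circ\gamma)'(t),(F\circ\gamma)'(t)\rangle_\mcal = \langle \gamma'(t),\gamma'(t)\rangle_\eucl$ equates the integrands, so $\ell_\mcal(F\circ\gamma) = \ell_\eucl(\gamma)$ and similarly $E_\mcal(F\circ\gamma) = E_\eucl(\gamma)$. Angles at the vertices of $K_0$ are preserved because $dF$ preserves inner products, so the cosine formula \eqref{eq:cos_angle} is unchanged. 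Area is preserved because (4) gives $\det J_K = (\det [\mcal](F_K))^{-1/2} \det J_{\khat \to K_0}$, which cancels the $\sqrt{\det[\mcal]}$ factor in the Riemannian volume element, yielding $|K|_\mcal = |K_0|_\eucl$. For (3), I would invoke the standard fact from Riemannian geometry that isometries carry geodesics to geodesics: the edges of the linear simplex $K_0 \subset \widehat{\Omega}$ are Euclidean straight segments — the geodesics of $(\widehat{\Omega},\eucl)$ — so their images under $F$ are geodesics of $(M,\mcal)$, making $K$ a geodesic simplex.

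Finally, (5) is a direct corollary of the pullback identity. Since $F_{K_0 \to K}$ coincides with $F$ restricted to $K_0$, we have $J_{K_0\to K}^T\, [\mcal](F(\bfy))\, J_{K_0\to K} = I$ identically on $K_0$; substituting into \eqref{eq:distortionK0} gives numerator $\trace(I) = n$ and denominator $(\det I)^{1/n} = 1$, so $\eta_{K_0}(\bfy) = n/n = 1$.

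The main subtlety I anticipate is in item (4): the derivation naturally produces a pointwise $SO(n)$-valued field $R(\bfxi)$, while the statement is phrased in terms of \emph{a} rotation matrix $R$, matching the linear-case convention of \Cref{prop:prop_linear_unit}.4. For constant $\mcal$ the rotation is indeed globally constant, recovering the linear formula; for a general (flat) $\mcal$, $R$ must be read pointwise, and the formula should be understood as a pointwise factorization of $J_K$ through a position-dependent element of $SO(n)$. Clarifying this reading — and connecting it to \Cref{def:unit_curved_triangle_jacobian}, which already treats $R$ in the same pointwise sense — is the only nontrivial step in an otherwise mechanical verification.
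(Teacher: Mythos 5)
Your proof is correct and follows essentially the same route as the paper's: orientation by composition for (1), invariance of lengths, angles, areas and energies under isometries for (2), the geodesic-to-geodesic property for (3), the pullback identity $J_F^T[\mcal]J_F = I$ for (4), and minimality of the distortion for (5) (the paper phrases (5) as the equality case of the AM--GM inequality applied to the eigenvalues of $[F^*\mcal]$, whereas you substitute $[F^*\mcal]=I$ directly; both are immediate). The one place where you genuinely diverge is item (4), and your version is the more careful one: the paper only checks that $J_F = \mcalmud O$ is \emph{sufficient} for the pullback identity, while you derive it as \emph{necessary} by setting $R(\bfy) = [\mcal]^{1/2}(F(\bfy))J_F(\bfy)$ and observing it is orthogonal. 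Your accompanying caveat is also well taken: nothing in this argument forces $R$ to be a single constant matrix (e.g.\ for the flat metric $\diag(1,r^2)$ in polar-type coordinates the orthogonal factor rotates with position), so the statement and the paper's proof, which assert ``a rotation matrix $R$'' uniformly over $\khat$, should indeed be read pointwise unless $\mcal$ is constant — a gap in the paper's own write-up that your proposal correctly flags rather than introduces.
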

\begin{proof}
    \hfill
  \begin{enumerate}
    \item $K_0$ is positively oriented and $F$ preserves the orientation.
    A $\pgeo$ simplex is valid if it is positively oriented everywhere, thus $K$ is valid if
    it happens to be a $\pgeo$ simplex.
    \item This is a direct consequence of each quantity being preserved by Riemannian isometries,
    which are in particular metric isometries. 
    \item The edges of $K_0$ are straight lines, which are geodesics in
    $\widehat{\Omega}$ with the Euclidean metric.
    Since isometries map geodesic curves to geodesic curves \cite{lee2012smooth},
    $K = F(K_0)$
    has geodesic edges in $(M,\mcal)$.
    \item This is a consequence of the metric-preserving property of isometries.
    As an isometry between $(\widehat{\Omega},\eucl)$ and $(M,\mcal)$, $F$ pulls the metric $\mcal$ back to the Euclidean metric on $\widehat{\Omega}$,
    that is, $F^*\mcal = \eucl$.
    Using \eqref{eq:matrix_pullback} and the fact that the component matrix of $\eucl$ is the identity matrix $I$, we have:
    \begin{equation}
      [F^*\mcal] = J_F^T [\mcal] J_F = [\eucl] = I.
      \label{eq:pullback_to_euclidean}
    \end{equation}
    This is satisfied when the Jacobian matrix is of the form $J_F = \mcalmud O = P\Lambda^{-1/2}P^TO$, with $O$ an orthogonal matrix satisfying $O^TO = OO^T = I$.
    Indeed, since $P$ is also orthogonal:
    \begin{equation}
      J_F^T [\mcal] J_F = O^TP\Lambda^{-1/2}P^TP\Lambda P^TP\Lambda^{-1/2}P^TO = I,
    \end{equation}
    thus $J_F =  \mcalmud O =  (\mcalmud \circ F)O$.
    Orthogonal matrices include rotations, reflections and their compositions, however, reflections have negative determinant, so that $F$ is no longer orientation-preserving.
    Thus, we only consider the orthogonal matrices $R \in SO_n$ associated to rotations, so that the Jacobian matrices of interest are of the form:
    \begin{equation}
      J_F = \mcalmud R.
    \end{equation}
    As $F_K = F_{\khat \to K} = F \circ F_{\khat \to K_0}$,
    one has $J_K = J_FJ_{\khat \to K_0} = \mcalmud R J_{\khat \to K_0}$.
    \item The distortion $\eta_{K_0}(\bfy)$ is the AM-GM inequality for the (strictly positive) eigenvalues of $[F^*\mcal]$:
    \begin{equation}
      \frac{1}{n}\trace\,[F^*\mcal] \geq \left( \det [F^*\mcal] \right)^{1/n},
    \end{equation}
    with equality $\eta_{K_0}(\bfy) = 1$ only if $[F^*\mcal] = cI$ for some $c > 0$.
    From \eqref{eq:pullback_to_euclidean}, we have $[F^*\mcal] = I$, thus the distortion is minimal and unit on $K_0$.
  \end{enumerate}
\end{proof}

The previous definitions of unitness are recovered by setting the ideal simplex to $K_0 = \keq$.
Indeed, $\keq$ has unit edges in the reference space $(\widehat{\Omega},\eucl)$ and $F$ preserves the edge lengths,
thus recovering \cref{def:linear_unit},
and the fourth property of \cref{prop:properties_isometric_elements} is precisely \cref{def:unit_curved_triangle_jacobian}.
Similarly, the invariants shared by linear unit triangles, \cref{prop:prop_linear_unit},
are direct consequences of Riemannian isometries preserving the inner products and lengths (and thus the angles) of vectors,
the area of compact manifolds, and the energy of curves \cite{bawin2024metric}.

Riemannian isometries impose a strong constraint on simplices,
as distance preservation is enforced locally,
unlike the linear definition where only the global edge length is preserved between $K_0$ and $K$.
As a result, an isometry $F$ between the Euclidean space $(\widehat{\Omega},\eucl)$
and a given manifold $(M,\mcal)$ does not generally exist,
and when it does, it typically cannot be expressed in closed form.
Manifolds $M \subseteq \Rn$ with a constant metric are a notable exception:
in this case, $F$ is linear and given explicitly by $F(\bfxi) = \mcalmud R\bfxi$.
Unit simplices are then the image $F(K_0) = \mcalmud R K_0 + \bfx$, as described by \eqref{eq:set_linear_unit_simplices}.
Similarly,
isometric simplices are generally not polynomial ($\pdeggeo$) simplices,
unless the image manifold allows it, see Section \ref{sec:examples}.

As the images of a linear ideal simplex $K_0$ under isometries,
unit simplices are geodesic simplices, that is, their edges are geodesic curves with minimal length with respect to $\mcal$.
This property has been exploited in recent work on high-order meshing to generate curved meshes through length minimization
\cite{zhang2018curvilinear, zhang2021generation, rocheryThesis, bawin2023optimally},
but was never formally established.
Since $K_0$ is linear and thus geodesic for the Euclidean metric,
the parameterizations of the edges of $K$ are restricted to geodesic curves, preserving the local distance-minimizing property of the edges of $K_0$.

In the same way that a single metric tensor describes an equivalence class of linear unit elements \cite{loseille2008adaptation},
a Riemannian metric describes an equivalence class of curved unit elements, where the equivalence relation is to be isometric
to one another.
Indeed, if $F_1, F_2$ are distinct isometries with Jacobian matrices $J_F = \mcalmud R_1$ and $\mcalmud R_2$,
their inverse are also isometries, so that the unit elements $K_1 = F_1(K_0)$ and $K_2 = F_2(K_0)$ are isometric and related by $K_2 = F_2(F_1^{-1}(K_1))$.

\subsection{Unit meshes}
In the continuous mesh framework \cite{loseille2008adaptation, loseille2011continuous, loseille2011continuous2},
a unit mesh consists exclusively of unit linear simplices and is \emph{regular}, meaning that it contains only edges of unit length.
Such meshes exist in one and two dimensions but not in three dimensions,
as there is no tessellation of $\Rtrois$ composed only of regular tetrahedra.
For practical applications,
nonregular tetrahedra that pave the space are considered \cite{loseille2008adaptation}.
In this paper, since Definition \ref{def:unit_isometry} defines unitness based on an arbitrary ideal simplex $K_0$,
a unit mesh is understood as a tiling composed only of copies of images of $K_0$:
\begin{definition}
  A unit (simplicial) mesh on an $n$-manifold is the image of a uniform simplicial tiling of $\Rn$ by an orientation-preserving isometry.
\end{definition}
Such unit meshes are regular only when $K_0$ is a regular simplex.
\Cref{fig:ref_tilings} shows two examples of ideal simplicial tilings of $\R$:
a regular tiling $\trieq$ made of copies of the equilateral triangle $\keq$,
and a nonregular tiling $\trirec$ made of copies of the reference right triangle $\khat$.
Each tiling generates an equivalence class of unit meshes, which are obtained as their image by an isometry.

\begin{figure}[tbhp]
  \centering
  \includegraphics[width=0.8\linewidth]{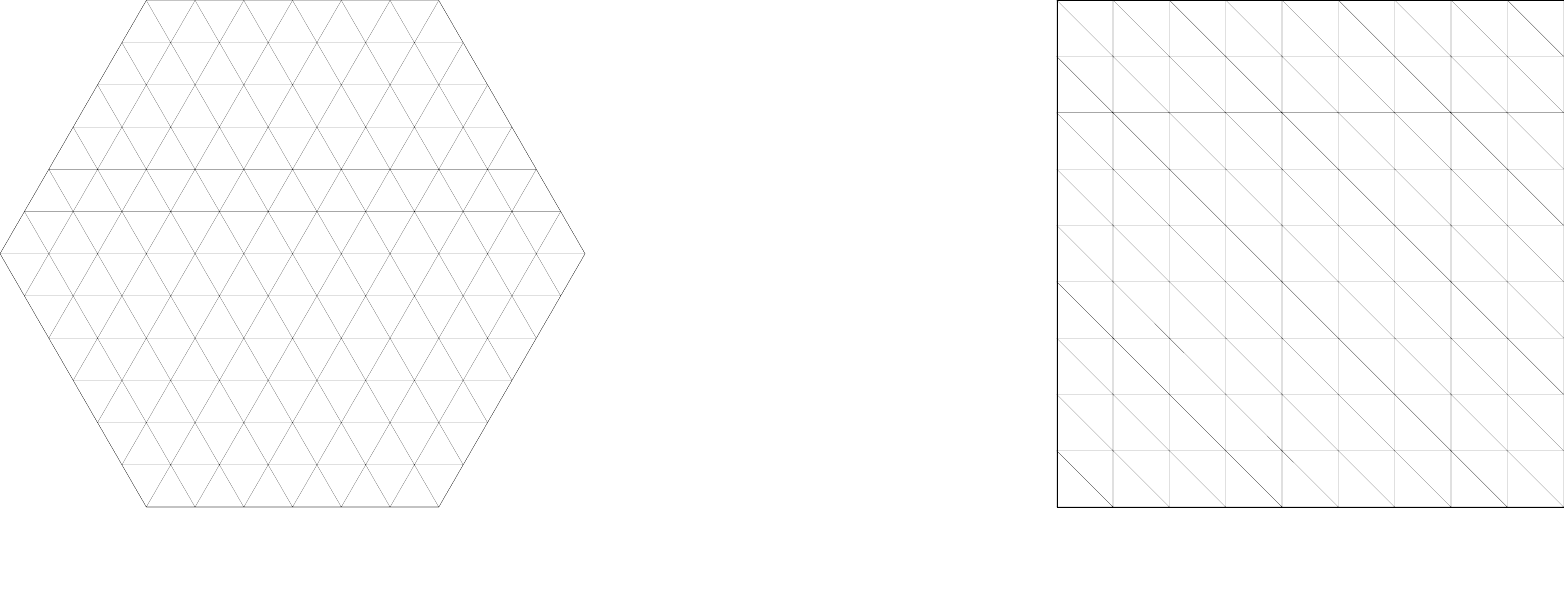}
  \caption{Uniform tilings $\trieq$ (left) and $\trirec$ (right) of $\R$.}
  \label{fig:ref_tilings}
\end{figure}


\subsection{Existence of isometric simplices and unit meshes}
\label{sec:existence}
By construction,
unit simplices and unit meshes exist on a manifold $M$ only if it is isometric to $\Rn$.
Such manifolds and their metric are said to be \emph{flat}, and are characterized by an absence of curvature,
in the sense that their \emph{Riemann curvature 4-tensor} vanishes everywhere \cite{lee2018introduction}.
This tensor has 0, 1 and 6 independent components in dimension $n=1,2,3$ respectively.
Thus, all 1-manifolds are flat and unit simplices exist for all Riemannian metrics,
leaving aside the considerations of the boundaries.

The curvature on 2-manifolds is completely determined by their scalar curvature, which is given (up to a factor 2)
by their Gaussian curvature.
Surfaces with zero Gaussian curvature, such as cones and cylinders, are called \emph{developable},
as they can be obtained by folding the plane without tearing it.
Perfectly unit meshes can be constructed on these surfaces, as illustrated in Section \ref{sec:examples}.
If $F$ is an isometry from $(\widehat{\Omega},\eucl)$ to such a 2-manifold and $R_0$ is any rotation matrix,
a unit mesh is obtained from $\trieq$ by first rotating it by $R_0$,
then applying $F$. Thus, $F(R_0\trieq)$ is a unit mesh, and as the image of an equilateral tiling, its elements have unit edge length in the deformed space.
Since two triangles in $\trieq$ are rotations of 0 or $\pi/3$ of one another, the Jacobian matrices $J_K$ of the triangles in $F(R_0\trieq)$
are explicitly given by $J_K = \mcalmud R_0\jeq$ and $J_K = \mcalmud R_0R(\frac{\pi}{3})\jeq$.

In two dimensions, Properties 2. and 3. of Proposition \ref{prop:properties_isometric_elements}
show that a curved triangle $K$ that is unit with respect to $K_0 = \keq$ and a metric $\mcal$
is a geodesic triangle with unit edges.
If the converse was true,
then one could generate curved unit simplices only by constructing geodesic edges of the right length.
This only holds in the narrow case of flat manifolds, however,
for which we have the equivalence result:

\begin{proposition}
    \label{prop:equivalence_unite_geodesique}
    Let $(M,\mcal)$ be a flat 2-manifold and let $K_0 \subset \widehat{\Omega}$ be a linear triangle.
    A curved triangle $K \subseteq M$ is isometric to $K_0$ iff it is a geodesic triangle whose homologous edges have the same length as those of $K_0$.
\end{proposition}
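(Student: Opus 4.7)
The forward implication is immediate from Proposition \ref{prop:properties_isometric_elements}: if $K = F(K_0)$ for an isometry $F$, then property 3 guarantees that the edges of $K$ are geodesic curves in $(M,\mcal)$, and property 2 ensures that each edge of $K$ has the same length as its preimage edge in $K_0$. So the plan is to focus on the converse.

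For the converse, assume $K$ is a geodesic triangle in $(M,\mcal)$ whose edges have the same lengths as the homologous edges of $K_0$. The approach is to use the flatness hypothesis to find a local isometry straightening $K$ into a Euclidean triangle, then use Euclidean SSS congruence to finish. First, I would invoke the standard result that a Riemannian 2-manifold has vanishing Riemann curvature tensor if and only if it is locally isometric to $(\R,\eucl)$: there exists an open neighborhood $U \subseteq M$ containing $K$ together with an isometry $G : U \to V \subseteq \widehat{\Omega}$. (A small technical caveat I would flag is that $U$ must be chosen so that $K$ lies entirely within a single flat chart; this is always possible because geodesic triangles are compact and the flat chart atlas covers $M$, so one can either assume $K$ lies in one such chart or, more carefully, build $G$ by analytic continuation along the triangle using the fact that on a simply connected flat domain the developing map is a global isometry onto its image.)

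Next, since $G$ is an isometry, it sends geodesics to geodesics. In $(\widehat{\Omega},\eucl)$ the geodesics are straight lines, so $G(K)$ is a straight-sided triangle in the Euclidean plane. Its edges have the same lengths as the corresponding edges of $K$, hence the same lengths as the homologous edges of $K_0$. By the classical SSS congruence theorem in the Euclidean plane, there exists a rigid motion (i.e., a Euclidean isometry) $H : \widehat{\Omega} \to \widehat{\Omega}$ such that $H(G(K)) = K_0$, and $H$ can be chosen orientation-preserving since both $K_0$ and $G(K)$ are positively oriented (the latter because $G$ is orientation-preserving, which can be arranged by composing with a reflection if needed).

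Finally, set $F \triangleq (H \circ G)^{-1} : \widehat{\Omega} \to M$, restricted to the appropriate domain. As the inverse of a composition of orientation-preserving isometries, $F$ is itself an orientation-preserving Riemannian isometry, and by construction $F(K_0) = K$. This exhibits $K$ as the image of $K_0$ under an isometry, which is precisely \cref{def:unit_isometry}. The main obstacle here is the subtlety around the global existence of the developing map $G$ on a neighborhood large enough to contain $K$; this is where the 2-dimensional flatness assumption does real work, since on non-simply-connected flat surfaces one has to verify that the triangle can be lifted to the universal cover (or equivalently, that the holonomy around $K$ is trivial), which holds because $K$, being a simplicial region, is simply connected.
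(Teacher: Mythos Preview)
Your proof is correct and follows essentially the same route as the paper's: use flatness to transport $K$ to a straight-sided Euclidean triangle, invoke SSS congruence to identify it with $K_0$ up to a Euclidean isometry, then compose. You are actually more careful than the paper about the local-versus-global existence of the flattening isometry and about orientation, whereas the paper simply asserts a global isometry $F:\R\to M$; your added discussion of the developing map and simple connectedness of $K$ is a welcome refinement but not a different strategy.
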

\begin{proof}
The $\Rightarrow$ direction is simply applying the properties of an isometry, which preserves the lengths and geodesics.
To prove the $\Leftarrow$ direction, we proceed as follows.
Since $M$ is flat, there is an isometry $F : \R \to M$ and a triangle $K_0'$ in $\R$ such that $K = F(K_0')$.
The lengths of the edges of $K_0'$ are the same as those of $K$, and thus of $K_0$.
By the side-side-side theorem for congruent triangles in the Euclidean space,
there is a Euclidean plane isometry $G$ such that $K_0' = G(K_0)$, so that $K = F(G(K_0))$ is isometric to $K_0$ by composing both isometries.
\end{proof}

In particular, for a constant metric $\mcal$,
linear triangles are geodesic, thus linear triangles with unit edges are unit with respect to $\mcal$ and $K_\triangle$,
which agrees with the usual definition of the continuous mesh framework.
This does not hold on nonflat manifolds, as any compact region with nonzero Gaussian curvature
enclosed in a geodesic simplex $K$ prevents the existence of an isometry with the Euclidean space.
Thus, for arbitrary metrics, controlling only the edges length and geodesicity is not sufficient to generate isometric simplices.
To create (quasi-)isometric simplices,
we propose in the following to control either a quality measure, similarly to \cite{aparicio2019defining, aparicio2022high}, or the inner angles in addition to the edges length and geodesicity.

Lastly, 3-manifolds, which are not detailed in this paper, are flat only if their \emph{Ricci curvature tensor} vanishes.
This tensor is a symmetric 2-tensor with 6 independent components, obtained by contracting two indices of the Riemann curvature tensor \cite{lee2018introduction}.

\subsection{Quasi-unit simplices}
\label{sec:quasiunit}

Definition \ref{def:unit_isometry} is elegant and provides a broader framework for unit simplices, but is overly restrictive,
as flat metrics represent an insignificant fraction of the Riemannian metrics at play when performing mesh adaptation for real-life applications.
For the practical generation of unit curvilinear meshes, a generalization of quasi-unitness is needed.
As unit elements should be isometric to an ideal simplex,
a certain number of relaxed properties from Proposition \ref{prop:properties_isometric_elements}
should be satisfied by quasi-unit elements.

In the linear case,
quasi-unit elements preserve the relaxed global edge lengths with respect to $\keq$,
meaning their edge lengths lie within $[1/\sqrt{2},\sqrt{2}]$.
In practice, the inner angles are also globally controlled to avoid the creation of sliver elements with volumes approaching zero.
However, individual inner angles or the geodesicity of the edges, which are local properties,
are not controlled.

In addition to edge lengths, quasi-unit curvilinear elements should also preserve relaxed local properties,
such as inner angles, distortion or quasi-geodesic edges.
In the following, we propose two definitions of quasi-unit curvilinear elements:
a stronger definition in terms of a distortion-based quality,
and a weaker one in terms of geodesic edges with controlled inner angles.

In \cite{loseille2008adaptation, loseille2011continuous}, the sliverness of linear simplices is controlled by the following quality function, tailored for an equilateral ideal simplex in dimension $n=3$:
\begin{equation} \label{eq:quality_loseille}
  Q_{\keq}(K) = \frac{36}{\sqrt[3]{3}} \frac{|K|_\mcal^{2/3}}{\sum_{i=1}^6 \ell_\mcal(e_i)^2} \in [0,1].
\end{equation}
This quality measure can be generalized to higher-order simplices by an elementwise distortion-based quality with respect to the generic ideal simplex $K_0$.
In dimension $n$, this quality is defined as follows from the pointwise distortion \eqref{eq:distortionK0}, similarly to \cite{aparicio2019defining, aparicio2022high}:
\begin{equation}
 Q_{K_0}(K) \triangleq \left( \frac{1}{|K_0|_{\eucl}} \int_{K_0} \eta_{K_0}^n(\bfy)\,d\bfy \right)^{-1/n} \in [0,1].
 \label{eq:elementwise_distortion_based_quality}
\end{equation}
For a constant metric and $K_0 = \keq$,
this reduces to \eqref{eq:quality_loseille}.
Indeed, we have:
\begin{equation}
  \det (\jeqK^T \,[\mcal]\, \jeqK)=(\det \jeqK)^2 \det \mcal,
\end{equation}
and, similarly to the third invariant of Proposition \ref{prop:prop_linear_unit} for linear simplices\footnote{
In particular, the Jacobian matrix is $\jeqK = \mcalmud R$ for linear unit simplices, thus
$\trace \, (\jeqK^T \,[\mcal]\, \jeqK) = \trace \, (R^T \mcalmud\,[\mcal]\, \mcalmud R) =
\trace \, (\mcalmud \,[\mcal]\, \mcalmud) = n$ and we recover the result of Proposition \ref{prop:prop_linear_unit} with $H = [\mcal]$.
}:
\begin{equation}
    \sum_{i = 1}^{N_e} \ell_\mcal(e_i)^2 = \sum_{i = 1}^{N_e} e_i^T[\mcal]e_i
    = \frac{n+1}{2}\,\trace \, \left(\jeqK^T \,[\mcal]\, \jeqK\right).
\end{equation}
For linear simplices, the distortion is constant on $K_0$ and we have, letting $J = \jeqK$:
\begin{equation}
  \begin{aligned}
    \int_{K_0} \eta_{K_0}^n\,d\bfy
    =
    \eta_{K_0}^n |K_0|_{\eucl}
    &=
    \frac{\trace^n(J^T[\mcal]J)}{n^n\,(\det J)^2 \det \mcal} \frac{|K_0|_{\eucl}^3}{|K_0|_{\eucl}^2}\\
    &=
    \frac{\trace^n(J^T[\mcal]J)}{n^n|K|^2_\mcal} |K_0|_{\eucl}^3,
  \end{aligned}
\end{equation}
thus:
\begin{equation}
  Q_{K_0} = n|K_0|_{\eucl}^{-2/n} \frac{|K|^{2/n}_\mcal}{\trace(J^T[\mcal]J)} = N_e|K_0|_{\eucl}^{-2/n} \frac{|K|^{2/n}_\mcal}{\sum_{i = 1}^{N_e} \ell_\mcal(e_i)^2} \in [0,1].
\end{equation}
For $K_0 = \keq$, we recover \eqref{eq:quality_loseille} with the normalization coefficients:
\begin{equation}
  N_e|\keq|_{\eucl}^{-2/n} = \frac{12}{\sqrt{3}} ~~~\text{for}~~n=2 ~~~\text{and}~~~\frac{36}{\sqrt[3]{3}}~~~\text{for}~~n=3.
\end{equation}
Quasi-unit simplices can then be extended as follows:

\begin{definition}
  (QU1)
  A curved simplex $K$ is quasi-unit with respect to $\mcal$ and $K_0$ if
  \begin{itemize}
    \item its quality $Q_{K_0}$ lies within $[a,1]$ for $0 < a < 1$;
    \item each edge $e$ of $K$ homologous to the edge $e_0$ of $K_0$ satisfies:
    \begin{equation}
      \frac{1}{\sqrt{2}} \ell_{\eucl}(e_0) \leq \ell_\mcal(e) \leq \sqrt{2}\ell_{\eucl}(e_0).
      \label{eq:quasi_isometry}
    \end{equation}
  \end{itemize}
\end{definition}
This definition controls both the distortion, thus the inner angles, and the edge lengths of the simplex $K$, ensuring that it is quasi-isometric to $K_0$.
In \cite{loseille2008adaptation, loseille2011continuous}, the constant $a$ is set to 0.8 to allow for a variety of families of tetrahedra to be considered for three-dimensional meshing.

Alternatively, quasi-unitness can be addressed through the geodesic property of the edges.
From the Gauss-Bonnet formula \cite{lee2018introduction},
geodesic triangles preserve the sum of their interior angles only when the enclosed Gaussian curvature of the metric cancels out,
which is not expected in general.
Instead, we propose to preserve the relaxed angles individually as follows:
\begin{definition}
  (QU2)
  A curved simplex $K$ is quasi-unit with respect to $\mcal$ and $K_0$ if
  \begin{itemize}
    \item it is a geodesic simplex;
    \item each angle $\theta$ homologous to the angle $\theta_0$ in $K_0$ satisfies
    $\theta_0 - b \leq \theta \leq \theta_0 + b;$
    \item its homologous edges with respect to $K_0$ satisfy \eqref{eq:quasi_isometry}.
  \end{itemize}
\end{definition}
In this definition, $b$ is an angle increment allowing the practical generation of such quasi-unit simplices.
In the case $K_0 = \keq$, for instance, our results \cite{bawin2024metric}
indicate that quasi-unit meshes can be generated with a vast majority of angles lying within $60^\circ \pm 30^\circ$,
Providing a more accurate value for the increment $b$ will be the object of future work.

Clearly, a unit simplex in the sense of \cref{def:unit_isometry} is quasi-unit in the sense of both \emph{QU1} and \emph{QU2}.
Imposing the inner angles is a weaker condition than controlling the distortion,
and geodesicity is a local property,
thus \textit{QU2} is weaker than \textit{QU1} in terms of pointwise conformity to $K_0$.
In practice, however, \textit{QU1} requires integration in a varying metric field and is thus costlier to enforce.
Additionally, the metric is often \emph{graded} \cite{alauzet2010size}, limiting its variations and curvature.
As a result, \textit{QU2} is expected to be fast
to generate while remaining a satisfying indicator of unitness.

In standard anisotropic mesh adaptation, all straight simplices are also $\pun$ simplices,
that is, all straight lines are represented by a linear parameterization,
and there is no distinction between a simplex and a polynomial simplex.
For curvilinear simplices, however, geodesic edges do not usually have a polynomial parameterization,
and there should be a distinction between quasi-unit simplices and quasi-unit $\pgeo$ simplices.
The definitions above can be adapted for $\pgeo$ simplices as follows:

\begin{definition}
  (\textit{QU1p})
  A $\pgeo$ simplex $K$ is quasi-unit w.r.t. $\mcal$ and $K_0$ if
  \begin{itemize}
    \item its quality $Q_{K_0}$ is in $[a,1]$;
    \item if homologous edges $e$ and $e_0$ satisfy \eqref{eq:quasi_isometry}.
  \end{itemize}
\end{definition}

\begin{definition}
  (\textit{QU2p})
  A $\pgeo$ simplex $K$ is quasi-unit w.r.t. $\mcal$ and $K_0$ if:
  \begin{itemize}
    \item it is valid;
    \item its edges approximate the geodesics, thus are of minimal length;
    \item homologous inner angles $\theta$ and $\theta_0$ satisfy $\theta_0 - b \leq \theta \leq \theta_0 + b;$
    \item homologous edges $e$ and $e_0$ satisfy \eqref{eq:quasi_isometry}.
  \end{itemize}
\end{definition}

\textit{QU1p} is the immediate polynomial version of \textit{QU1},
whereas \textit{QU2p} approximates a geodesic simplex with polynomial edges.
Note that enforcing validity is only required for \textit{QU2p} high-order simplices,
as \textit{QU1p} simplices have a strictly positive quality, which guarantees their validity.

\section{Examples of unit isometric triangulations}
\label{sec:examples}

We conclude by illustrating our new definition of unit element and unit meshes in two dimensions.
As a proof of concept, unit triangulations that are isometric to a uniform tiling of $\R$ with copies of either $\keq$ or $\khat$ are obtained through optimization.
To this end, each element of an initial triangulation is divided into a uniform subtriangulation,
and a Riemannian isometry is approximated by locally controlling lengths and areas.
A cost function measures the discrepancy between the ideal and current metric-weighted edge lengths of a subtriangle, while also accounting for the area of the subtriangle to ensure validity throughout the optimization.

The metric is induced by a variety of developable and non-developable surfaces embedded in $\Rtrois$.
The optimized meshes are curved in the parameter space in $\R$,
and their image on the surfaces consist of (quasi-)geodesic curves.

\subsection{Cost function and minimization}
Isometric meshes preserve distances and areas locally with respect to their preimage triangulation.
To mimic this property,
the local mesh size is controlled by dividing each triangle $K$ of an initial uniform triangulation into a subgrid of $N^2$ subtriangles.
Each subtriangle $k$ of $K$ is also isometric to a subtriangle of $K_0$,
whose edge lengths and area decrease with $1/N$ and $1/N^2$, respectively.
Depending on the ideal simplex $K_0$,
the target metric-weighted edge lengths and area of a subtriangle are thus:
\begin{equation}
  \ell_{\text{target}} = \frac{1}{N} ~~~\text{and} ~~~|k|_\text{target} = \frac{|\keq|_{\eucl}}{N^2} = \frac{\sqrt{3}}{4N^2}~~~~\text{if}~~K_0 = \keq,
\end{equation}
and
\begin{equation}
  \ell_{\text{target}} = \frac{1}{N} ~~\text{or}~~\frac{\sqrt{2}}{N} ~~~\text{and} ~~~|k|_\text{target} = \frac{|\krec|_{\eucl}}{N^2} = \frac{1}{2N^2}~~~~\text{if}~~K_0 = \krec.
\end{equation}
Note that $\krec$ requires differentiating between the hypotenuse and the edges adjacent to the right angle.

In addition to controlling the edge lengths,
control over the subtriangles area is required to avoid scrambling the elements during the optimization.
This is done by devising a cost function $E = \ecal + \fcal$ composed of two terms, similarly to \cite{toulorge2013robust}:
an energy $\ecal$ associated with the edge lengths,
and a penalization term $\fcal$ to enforce validity of the subtriangles.

The energy $\ecal$ measures the discrepancy between $\ell_{\text{target}}$ and the metric-weighted length of the edges of each subtriangle.
If the chosen reference tiling is $\trieq$, the target length is the same for all subedges and the energy of the triangulation $\tcal$ writes:
\begin{equation}
  \begin{aligned}
    \ecal_{\triangle}(\tcal) &= \frac{1}{2} \sum_{\text{edges}~\bfe} \left( \ell_\mcal^2(\bfe) - \frac{1}{N^2} \right)^2
  \end{aligned}
\end{equation}
For $\trirec$, the target length depends on whether the edge is mapped from one of the edges adjacent to the right angle $\bfe_\text{right}$ or from the hypotenuse $\bfe_\text{hyp}$:
\begin{equation}
  \begin{aligned}
    \ecal_{\khat}(\tcal) &= \frac{1}{2} \left(\sum_{\bfe_\text{right}} \left( \ell_\mcal^2(\bfe_\text{right}) - \frac{1}{N^2} \right)^2
                                                         + \sum_{\bfe_\text{hyp}} \left( \ell_\mcal^2(\bfe_\text{hyp}) - \frac{2}{N^2} \right)^2 \right).
  \end{aligned}
\end{equation}
The edge length is approximated by evaluating the metric at the middle of the edge, which is satisfactory if the subtriangles are fine enough.

The penalization term $\fcal$ controls the area of the subtriangles.
The optimal metric-weighted area of these subtriangles is $|k|_\text{target}$,
and they should remain valid throughout the optimization.
Indeed, minimizing only $\ecal$ leads to subtriangles with isometric edges
but that may be inverted or superimposed.
Validity of the linear subtriangles is ensured by enforcing that their area remains strictly positive.
As in \cite{toulorge2013robust}, we use a logarithmic barrier \cite{freitag2002comparison} to penalize very small subtriangles
and a quadratic function to penalize the very large ones.
This yields the penalization term:
\begin{equation}
  \fcal(\tcal) = \sum_{K \in \tri}\sum_{k \in K} F_\epsilon\left( \frac{|k|_\mcal}{|k|_\text{target}} \right)
  \quad\text{with}\quad
  F_\epsilon(x) = \left[\ln\left( \frac{x - \epsilon}{1 - \epsilon}\right)\right]^2 + \left( x - 1 \right)^2.
\end{equation}
The behavior of $F_\epsilon$ is shown in \Cref{fig:logbarrier}: it vanishes when $|k|_\mcal = |k|_\text{target}$,
blows up when $|k|_\mcal \to \epsilon$ and increases as $|k|_\mcal \to \infty$.
The parameter $\epsilon < 1$ sets the position of the barrier and allows for dynamic control of $\min |k|_\mcal$ to speed up the optimization.
The first log-barrier is set with $\epsilon = 0$ to guarantee that all subtriangles remain valid.
This is an adequate first value, as the starting linear mesh is valid.
After the first optimization is completed, $\epsilon$ is increased to $\min(0.99, 0.99 \times |k|_\mcal/|k|_\text{target})$, pushing the minimum area towards 1 during the next optimization.
The log-barrier is moved this way 5 times.
\begin{figure}[tbhp]
  \centering
  \includegraphics[width=0.4\linewidth]{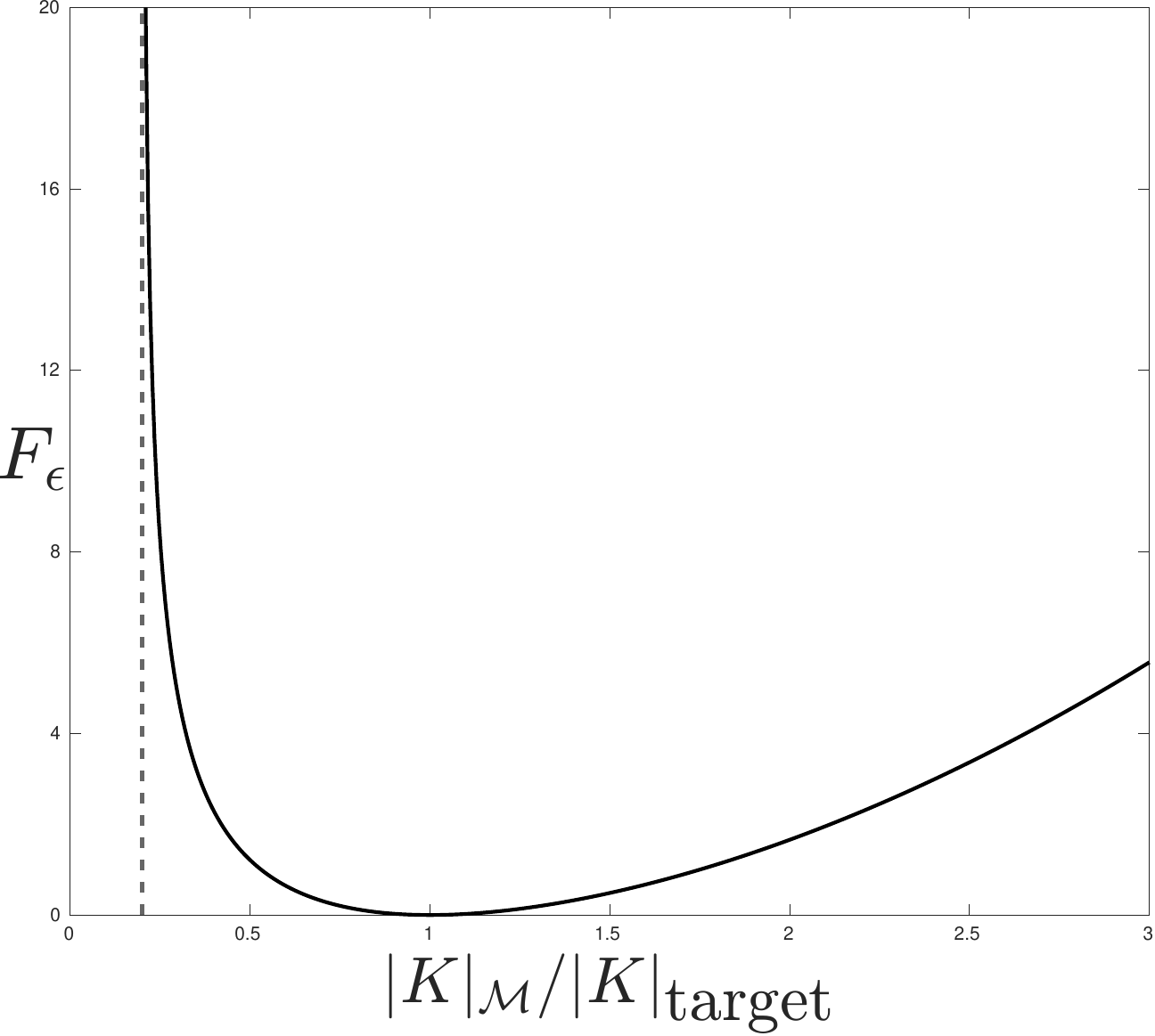}
  \caption{Log-barrier function for $\epsilon = 0.2$.}
  \label{fig:logbarrier}
\end{figure}

The overall cost function $E = \ecal + \fcal$ is smooth if the metric field is smooth,
thus its gradient $\nabla E = \partial E/\partial \bfx_i$ with respect to the position of the vertices of the subtriangles can be evaluated exactly.
However, the cost function is nonlinear and nonconvex,
In addition, it may have infinitely many global minima, as any rotation of the chosen ideal tiling $\tcal_0$ yields a unit mesh.
To minimize $E$, we use the quasi-Newton L-BFGS algorithm implemented in the minimization library \textsc{Ceres} \cite{agarwal2012ceres},
providing it with the exact gradient $\nabla E$.
For each of the examples hereafter, we minimize $E$ over the positions $\bfx_i$ of the vertices of the subtriangles.
The minimization terminates when either the relative decrease in the cost function, the maximum norm of its gradient or the change in the problem parameters $\Vert\Delta \bfx_i\Vert$
falls below the default thresholds of \textsc{Ceres}, which are $10^{-6}, 10^{-10}$ and $10^{-8}$ respectively.


\subsection{Setup and numerical results}
Starting from a uniform square mesh, we divide each triangle, called macrotriangles,
into $N$ subtriangles, which are then used to locally enforce the isometry.
The vertices are moved by minimizing the cost function $E$ at fixed connectivity ($r$-adaptation),
with the objective of obtaining a final mesh isometric to one of the reference tilings $\trieq$ or $\trirec$,
with respect to a given Riemannian metric.
Since the topology is unchanged during the optimization, the starting mesh has the same connectivity
as a regular tiling, with six triangles per interior vertex.
When the metric is nonuniform,
the macrotriangles after minimization have piecewise linear edges that approximate curved edges.
We consider metrics induced by surfaces embedded in $\Rtrois$
that are the graph of a real-valued function $f(x,y)$ and are described by $S = (x,y,f(x,y)) \in \Rtrois$.
The global coordinates $(x,y)$ lie in the so-called parameter space $U \subseteq \R$,
and the component matrix of the induced metric on the surface $S$ reads in these coordinates:
\begin{equation} \label{eq:induced_metric}
[\mcal] = \begin{pmatrix}1 + (\partial_xf)^2 & \partial_xf\partial_yf\\\partial_xf\partial_yf & 1 + (\partial_yf)^2\end{pmatrix}.
\end{equation}
The Riemannian metric on $S$ is the ambient Euclidean metric of $\Rtrois$,
but in the parameter space, it exhibits anisotropy induced by the shape of the surface,
which is visible on the optimized meshes.
These meshes appear as the projection on the parameter space of a piece of fabric laid upon each surface.

To assess the quality of the optimized triangulations,
we compare the edge lengths with those of $K_0$,
and measure the elementwise quality $Q_{K_0}(k) \in [0,1]$
given by \eqref{eq:elementwise_distortion_based_quality},
which is 1 on perfectly isometric triangulations.
For each choice of Riemannian metric,
the approximated geodesics connecting the mesh vertices are superimposed to the curved edges for comparison.
These are obtained by integrating the geodesic equation \cite{lee2018introduction} with an explicit fourth-order Runge-Kutta scheme,
using a bisection to determine the initial velocity required to reach the prescribed endpoint.

\subsubsection{Developable surfaces}
We first consider developable surfaces embedded in $\Rtrois$.
As discussed in \Cref{sec:existence}, these 2-manifolds are isometric to the Euclidean space and can be covered by triangulations that are isometric to
any tiling of the Euclidean space.
We consider the three surfaces $S_i = (x,y,f_i(x))$ obtained by extruding the
graph of $f_1 = x, f_2 = x^2$ and $f_3 = \exp(-10x^2)$ along the $y$-axis,
\Cref{fig:developable_surfaces},
with Riemannian metric:
\begin{equation}
[\mcal_i] = \begin{pmatrix}1 + (f_i'(x))^2 & 0\\0 & 1\end{pmatrix}.
\end{equation}

\begin{figure}[tbhp]
  \centering
  \includegraphics[width=0.9\linewidth]{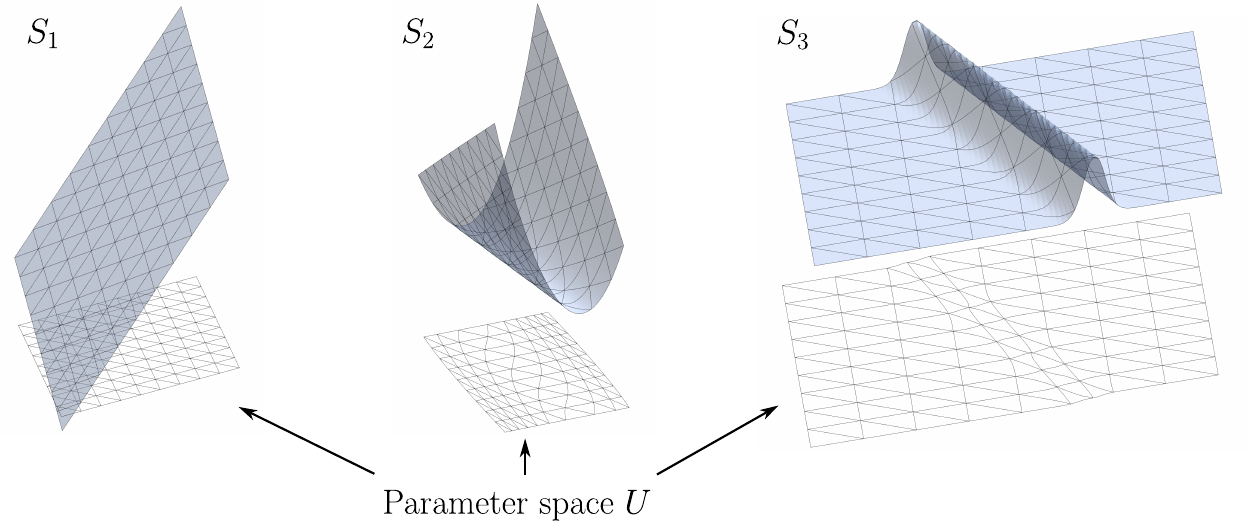}
  \caption{Developable surfaces in $\Rtrois$ and their parameter space $U \subseteq \R$.
  The isometric triangulations are uniform and geodesic on each surface but appear curved and distorted on $U$.}
  \label{fig:developable_surfaces}
\end{figure}
On optimal triangulations, the cost function $E$ is exactly zero,
and any two of these triangulations only differ by the initial rotation of the regular tiling.
There is thus an infinite number of global minima for $E$, as each rotation yields an isometric mesh.
This rotation can be imposed by fixing an edge or subedge of the initial mesh,
but it requires prior knowledge the location of a straight edge.
In our tests, all vertices are free to move, so that the rotation is determined by the initial conditions given to the optimizer.
In particular, different rotations of the initial mesh yield unit triangulations that differ by the rotation applied to the reference tiling before applying the map $F$.

The optimized triangulations are shown in \Cref{fig:unitMeshes_developable_eq,fig:unitMeshes_developable_rec},
and are optimized with respect to $\trieq$ and $\trirec$ respectively.
These triangulations are everywhere isometric to their reference tiling,
and are thus unit curvilinear meshes with respect to $\mcal_i$
in the sense of all three Definitions \ref{def:linear_unit}, \ref{def:unit_curved_triangle_jacobian} and \ref{def:unit_isometry}.
For each surface, optimal triangulations $\tri_j$ obtained from three different rotations $R_j$ of the initial mesh are shown.
They are not rotations of one another, as each of them satisfies either $\tri_j = F(R_j\trieq)$ or $\tri_j = F(R_j\trirec)$,
where $F$ is an isometry between $\R$ and the considered surface.
Instead, they are related by $\tri_2 = F(R_2R_1^TF^{-1}(\tri_1))$.

The metric associated with $S_1$ is constant,
so the unit mesh is linear, as observed at the top of \Cref{fig:unitMeshes_developable_eq,fig:unitMeshes_developable_rec}.
For this particular surface, the isometry $F$ can be written explicitly as $F : \R \to U : \bfy \mapsto \mcalmud R \bfy$ for some rotation matrix $R$,
whereas it requires solving a nonlinear ODE in $f$ in general.
The lengths and areas with respect to this constant metric are computed exactly by quadratures,
so the cost function can be minimized close to machine precision ($10^{-14}$ and $10^{-12}$ with respect to $\trieq$ and $\trirec$ respectively).
Similarly, the metric of $S_3$ becomes the identity away from 0 as $\exp \to 0$ and the mesh quickly becomes linear.
Whenever the metric varies, the isometric triangulations are curved: they can be viewed as straight grids drawn on the surfaces,
then projected onto the parameter space.

The geodesics connecting the mesh vertices are drawn in blue and superimposed on the curved edges.
They perfectly match the edges for all triangulations,
in agreement with Proposition \ref{prop:properties_isometric_elements} as the elements are geodesic triangles.
The edges are the intersections of three geodesic grids that are parallel transported across the surface:
this is straightforward on linear meshes, but can be appreciated on the meshes of $S_2$ and $S_3$.
In these grids, the angles (measured with respect to the local metric) at which two sets of parallel geodesics intersect are either $\pi/3$ when
the reference tiling is $\trieq$ or $\pi/2$ and $\pi/4$ when the tiling is isometric to $\trirec$.
Following Proposition \ref{prop:properties_isometric_elements}, the distortion (not shown) is minimal and equal to 1 on all linear subtriangles,
and the curved edges are isometric to the edges of the reference triangle, \cref{fig:edgeLength_developable}.
Together, these two criteria ensure that the isometries are enforced globally (edges lengths) and locally (distortion + edges lengths).

\begin{figure}[tbhp]
  \centering
  \begin{tikzpicture}
    \node[anchor=south west,inner sep=0] (image) at (0,0)
    {\includegraphics[width=0.9\linewidth]{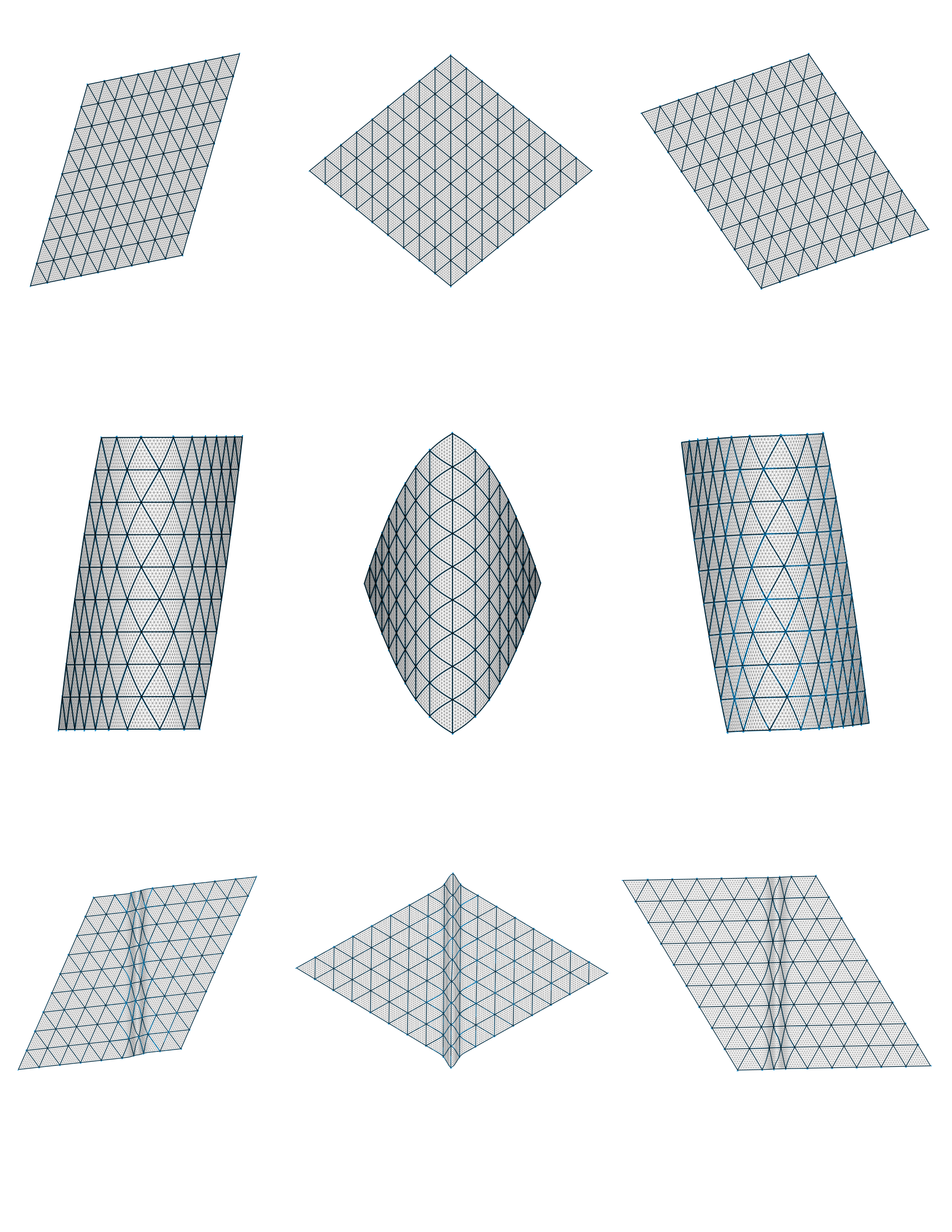}};
    \begin{scope}[x={(image.south east)},y={(image.north west)}]
      \node at (   0,0.85) {$x$};
      \node at (0.15,0.73) {$E_\text{initial} = \texttt{3.9862e+00}$};
      \node at (0.45,0.73) {$E_\text{initial} = \texttt{5.8737e+00}$};
      \node at (0.85,0.73) {$E_\text{initial} = \texttt{7.8995e+00}$};
      \node at (0.15,0.70) {$E_\text{final} ~=  \texttt{1.3417e-14}$};
      \node at (0.45,0.70) {$E_\text{final} ~=  \texttt{7.2833e-14}$};
      \node at (0.85,0.70) {$E_\text{final} ~=  \texttt{1.4437e-14}$};
      \node at (   0,0.55) {$x^2$};
      \node at (0.15,0.355) {$E_\text{initial} = \texttt{3.9044e+00}$};
      \node at (0.45,0.355) {$E_\text{initial} = \texttt{3.4381e+03}$};
      \node at (0.85,0.355) {$E_\text{initial} = \texttt{4.6694e+03}$};
      \node at (0.15,0.325) {$E_\text{final} ~=  \texttt{3.0017e-08}$};
      \node at (0.45,0.325) {$E_\text{final} ~=  \texttt{1.5992e-03}$};
      \node at (0.85,0.325) {$E_\text{final} ~=  \texttt{2.1175e-05}$};
      \node at (0,0.22) {$e^{-10x^2}$};
      \node at (0.15,0.09) {$E_\text{initial} = \texttt{3.7692e+00}$};
      \node at (0.45,0.09) {$E_\text{initial} = \texttt{7.7535e+00}$};
      \node at (0.85,0.09) {$E_\text{initial} = \texttt{9.0946e+00}$};
      \node at (0.15,0.06) {$E_\text{final} ~=  \texttt{6.2525e-05}$};
      \node at (0.45,0.06) {$E_\text{final} ~=  \texttt{2.0653e-03}$};
      \node at (0.85,0.06) {$E_\text{final} ~=  \texttt{1.2795e-04}$};
    \end{scope}
  \end{tikzpicture}
  \caption{Triangulations with $N = 10$ subdivisions which are isometric to $\trieq$.}
  \label{fig:unitMeshes_developable_eq}
\end{figure}

\begin{figure}[tbhp]
  \centering
  \includegraphics[width=0.9\linewidth]{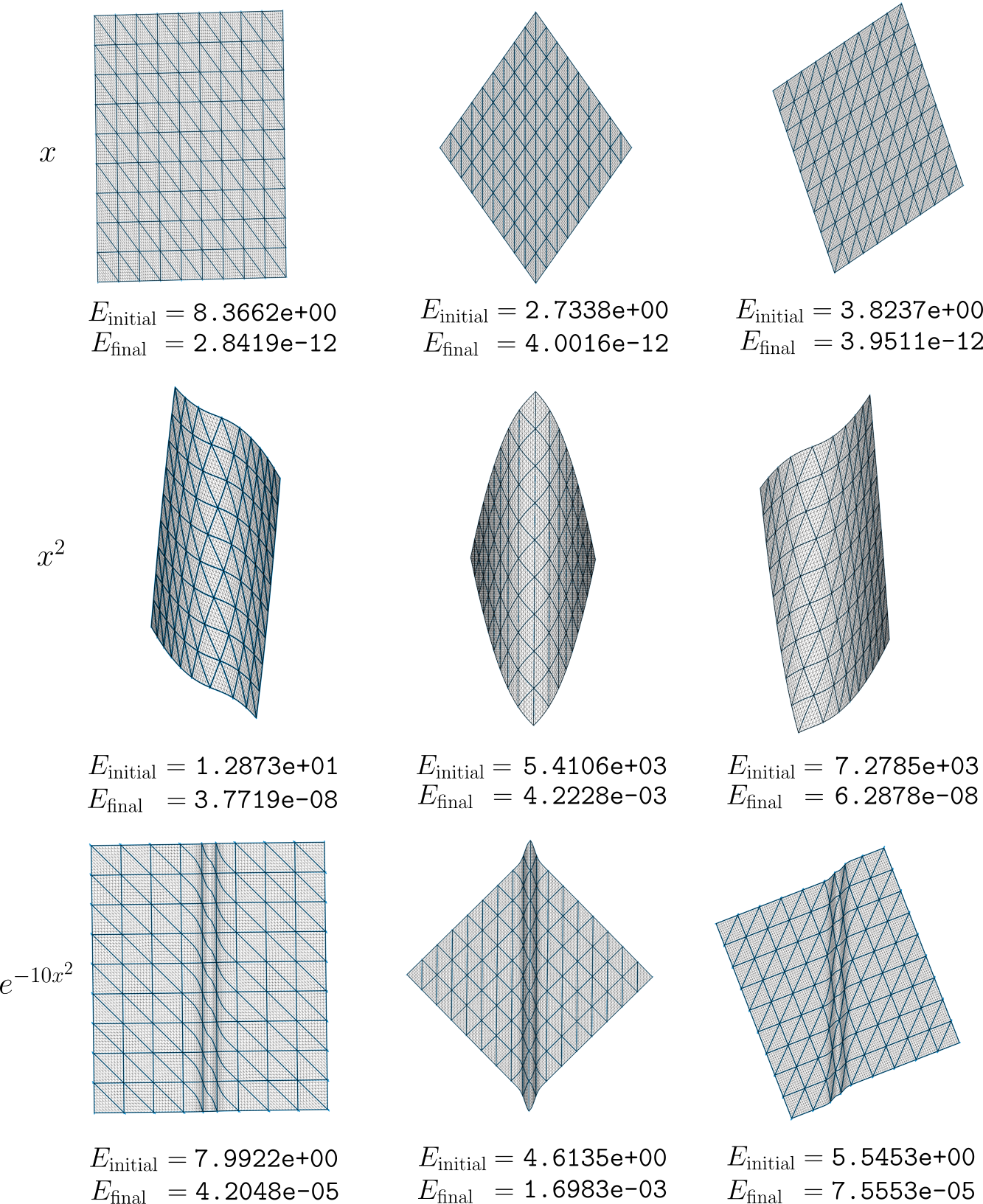}
  \caption{Triangulations with $N = 8$ subdivisions which are isometric to $\trirec$.}
  \label{fig:unitMeshes_developable_rec}
\end{figure}

\begin{figure}[tbhp]
  \includegraphics[width=\linewidth]{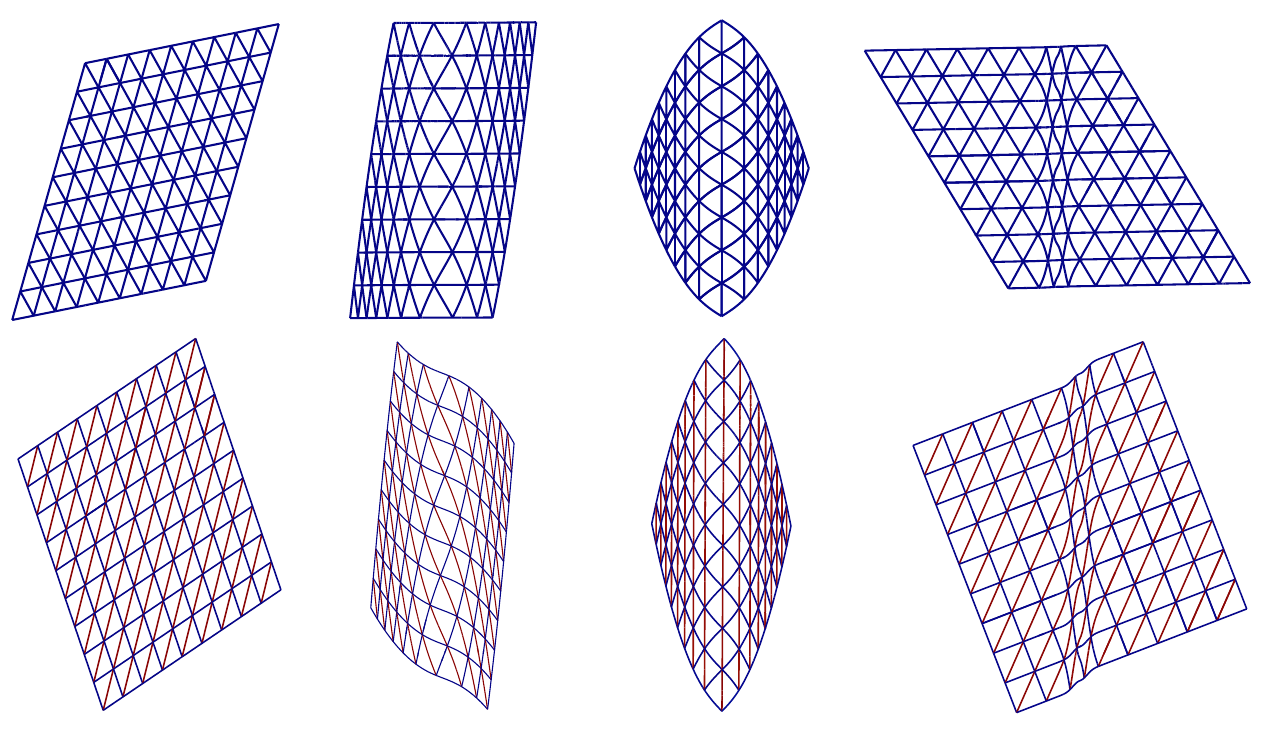}
  \caption{Edge length with respect to the metric. On top, all edges are unit. On the bottom, the edges are either unit (blue) or of length $\sqrt{2}$ (red).}
  \label{fig:edgeLength_developable}
\end{figure}

\hfill\break
\paragraph{Isometric $\pdeux$ triangles}
In addition to being isometric, the optimized triangulations are also quadratic (i.e., consist only of $\pdeux$ triangles) if $\mcalmud$ is linear.
The metric
\begin{equation}
  \mcalmud =
  \begin{pmatrix}
    x & 0\\
    0 & 1
  \end{pmatrix}
  ~~\rightarrow~~
  [\mcal] =
  \begin{pmatrix}
    1 + (f'(x))^2 & 0\\
    0 & 1
  \end{pmatrix}
  =
  \begin{pmatrix}
    x^{-2} & 0\\
    0 & 1
  \end{pmatrix}
\end{equation}
is induced by the surface $S_4 = (x,y,f_4(x))$, with $f_4(x)$ defined on $(0,1)$ by:
\begin{equation}
f_4(x) = \frac{x \sqrt{x^{-2}-1} \left( \sqrt{x^2-1} - \arctan(\sqrt{x^2-1}) \right)}{\sqrt{x^2-1}},
\end{equation}
so that $f_4'(x) = \sqrt{x^{-2}-1}$.
This function is real-valued on $(0,1)$ even though it involves the roots of $x^2-1$.
To avoid complications outside of $(0,1)$ where $f$ diverges or is complex-valued,
the initial mesh covers $[0.0174, 0.3495] \times [-2,2]$ when optimizing for $\trieq$ and $[0.0174, 0.95] \times [-2,2]$ for $\trirec$.
This yields meshes with respectively 3 and 4 unit edges along $x$ and 4 along $y$.
When optimizing for $\trieq$, the vertices can move freely, whereas for $\trirec$ they are constrained to slide along the boundary,
as the initial mesh is close to the optimal triangulation.
In that case, the boundary vertices of the initial mesh are obtained by bisection to split the upper and lower boundaries into 4 unit edges.
The initial subdivision is uniform and the additional vertices on the boundaries are free to slide to comply with the metric.
The optimized triangulations are shown in \cref{fig:isoP2}:
because $\mcalmud$ is linear,
the resulting triangles are parabolic and isometric to either $\keq$ or $\krec$,
that is, they are perfectly unit $\pdeux$ triangles.
%

\begin{figure}[tbhp]
  \includegraphics[width=\linewidth]{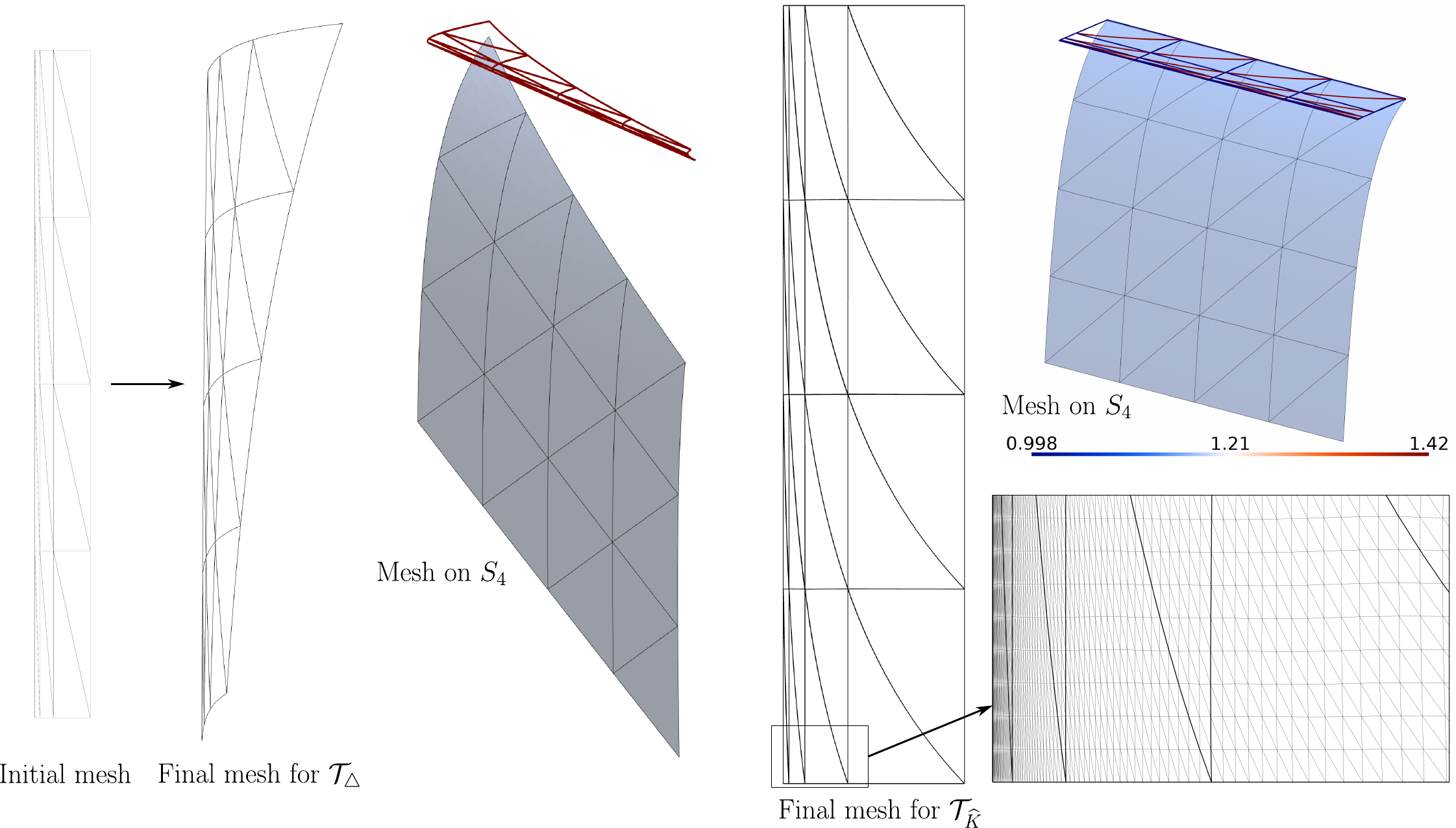}
  \caption{Isometric $\pdeux$ triangulations with respect to $S_4$. The equilateral mesh has unit edges everywhere (red),
  and the mesh made of right triangles has unit edges (blue) and edges of length $\sqrt{2}$ (red).}
  \label{fig:isoP2}
\end{figure}

\subsubsection{Non-developable surfaces}
Next, we consider isometric triangulations on less trivial surfaces.

\paragraph{Radial function}
The surface $S_5$ is the graph of $f(x,y) = x^2 +y^2 = r^2$.
In polar coordinates, the metric writes:
\begin{equation}
  [\mcal] =
  \begin{pmatrix}
    1 + 4r^2 & 0\\
    0 & 1
  \end{pmatrix},
\end{equation}
hence the length of an arc on the surface with constant radius is equal to its Euclidean length,
and the metric only modifies the computation of lengths in the radial direction.

Away from the origin, we can expect the optimized mesh to be very close to isometric.
In \cref{fig:x2y2_shifted}, we start from a uniform mesh of $[-1,5] \times [-3,3]$, not centered at the origin.
The optimized meshes are essentially isometric to $\trieq$ or $\trirec$:
the curved edges match the geodesics, the distortion-based quality lies in $[0.992, 1]$ for all subtriangles
and the edges preserve the length of the reference simplex with respect to the metric.
The edge lengths lie within $[0.94,1.02]$ for triangles isometric to the unit equilateral triangle
and within $[0.96,1.42]$ for triangles isometric to the right triangle of sides 1 and $\sqrt{2}$.
The honeycomb pattern of $\trieq$ and the split squares pattern of $\trirec$ differ only by their inner angles with respect to the metric,
which are $\pi/3$ for the former and $\pi/2$ and $\pi/4$ for the latter
(the exact angles of the optimized meshes with respect to the metric are not shown).
In the Euclidean space, it is thus subtle to distinguish between both patterns,
as their inner angles depends on the position,
however, the edge lengths plot shows which edges are longer than the others and gives a better idea of the inner angles.
The optimized triangulation for $\trieq$ is projected back onto $S_5$ in \cref{fig:x2y2_3d},
together with two optimized triangulations centered at the origin and not isometric to $\trieq$.
\cref{fig:x2y2_3d} provides a quick visual assessment the quality of the optimization:
geodesic meshes should appear straight on the surface despite being curved in the parameter space.
This is clearly not the case for the first two meshes;
however, the last one, which is farther from the origin, is very close to being isometric,
and its projection on the surface is a uniform tiling of straight equilateral triangles.

\begin{figure}[tbhp]
  \centering
  \begin{tikzpicture}
    \node[anchor=south west,inner sep=0] (image) at (0,0)
    {\includegraphics[width=0.9\linewidth]{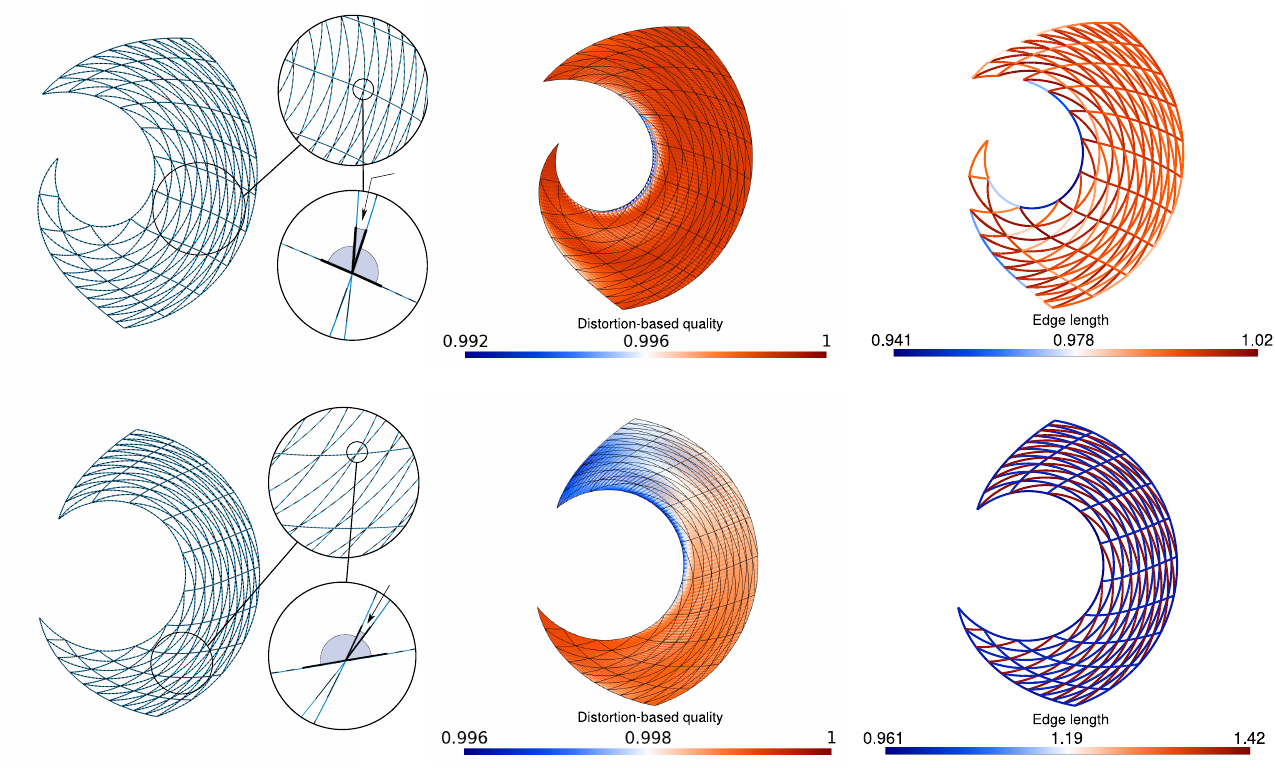}};
    \begin{scope}[x={(image.south east)},y={(image.north west)}]
      \node at (0,0.76) {$\trieq$};
      \node at (0,0.26) {$\trirec$};

      \node at (0.26,0.71) {$\frac{\pi}{3}$};
      \node at (0.325,0.775) {$\frac{\pi}{3}$};
      \node at (0.31,0.67) {$\frac{\pi}{3}$};

      \node at (0.24,0.17) {$\frac{\pi}{2}$};
      \node at (0.315,0.26) {$\frac{\pi}{4}$};
      \node at (0.31,0.18) {$\frac{\pi}{4}$};
    \end{scope}
  \end{tikzpicture}
  \caption{Quasi-isometric triangulations for $S_5$ away from the origin.}
  \label{fig:x2y2_shifted}
\end{figure}

\begin{figure}[tbhp]
  \centering
  \includegraphics[width=\linewidth]{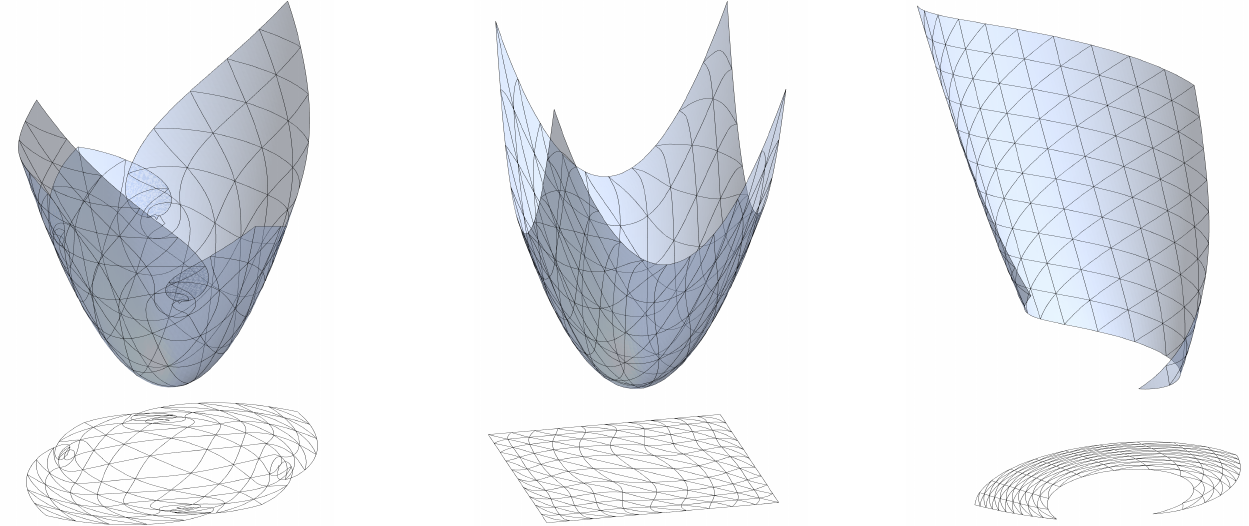}
  \caption{Optimized triangulations for $S_5 = (x,y,x^2+y^2)$ and their projection on $S_5$.}
  \label{fig:x2y2_3d}
\end{figure}

\paragraph{Undulating boundary layer}
Finally, the surface $S_6$ is adapted from the one proposed in \cite{aparicio2022high}.
It mimics an undulating boundary layer, and its metric is defined by:
\begin{equation}
  [\mcal] = J_\varphi^T
  \begin{pmatrix}
    1 & 0\\
    0 & h^{-2}(x,y)
  \end{pmatrix}
  J_\varphi,
  ~~~~\text{with}~~~~
  \varphi(x,y) = \left( x, \frac{10y - \cos(2\pi x)}{\sqrt{100 + 4\pi^2}} \right)
\end{equation}
and where the sizing function is given by:
\begin{equation}
  h(x,y) = 0.1 + 2 \left|\frac{10y - \cos(2\pi x)}{\sqrt{100 + 4\pi^2}}\right|.
\end{equation}
The mesh size is constant in the $x$-direction (before applying the map $\varphi$)
and grows from $0.1$ in the $y$-direction according to $h(x,y)$.
The anisotropic quotient of $\mcal$, defined in two dimensions by \cite{loseille2008adaptation}:
\begin{equation}
  \text{quo} = \max_{i\,=\,1,2} \sqrt{\frac{h_i^2}{h_1h_2}} = \max_{i\,=\,1,2} \frac{(\det \mcal)^{1/4}}{\sqrt{\lambda_i}}
\end{equation}
is shown in \cref{fig:anisotropic_quotient}.
It is a ratio of principal lengths obtained from the ratio of areas associated to the principal sizes $h_1, h_2$.
The highest anisotropy occurs along the curve $10y - \cos(2\pi x) = 0$.
\begin{figure}[hbt!]
  \centering
  \includegraphics[width=\linewidth]{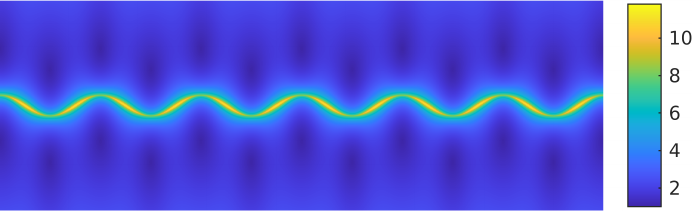}
  \caption{Anisotropic quotient of $\mcal$ for $S_6$ in log scale for $[-3,3] \times [-1,1]$.}
  \label{fig:anisotropic_quotient}
\end{figure}
The initial triangulation is a uniform mesh of $[-1,1]^2$ and all vertices are free to move.
The optimized meshes with respect to $\trieq$ and $\trirec$ are shown in \cref{fig:wave16rec}.
The curved elements follow the waves of the boundary layer as expected.
Because the characteristic mesh size is set to 1 along the $x$-direction and the argument of the cosine is $2\pi x$,
a unit edge covers exactly one wavelength.
Thus, to accurately represent the curved edges with high-order triangles,
polynomials of degree at least 3 are required.
For both optimized meshes, the quality of the subtriangles lies within $[0.9, 1]$ almost everywhere, but drops as low as 0.72
in the boundary layer where the curvature is the highest.
This drop depends directly on the number of subtriangles used to represent the region of high curvature
and can be reduced by further refining the macrotriangles.
The edge lengths with respect to the metric agree with the chosen reference triangle,
and homologous edges are quasi-isometric.
In \cite{aparicio2022high},
the meshes are optimized with respect to the distortion $\eta_{\keq}$.
As the distortion is length-agnostic and yields conformally flat triangulations,
the elements size is determined by the initial mesh.
In other words, the meshes obtained in \cite{aparicio2022high} after distortion minimization are (quasi-)conformally equivalent
to $\trieq$, that is, they preserve the angle of the tiling.
They are not (quasi-)isometric to $\trieq$ in general, unless
the optimized mesh also happens to preserve the lengths.
Here, the length is enforced locally instead and the minimization of the distortion is achieved
as a consequence of the macrotriangles being quasi-isometric to either $\keq$ or $\krec$,
in agreement with Proposition \ref{prop:properties_isometric_elements}.

\begin{figure}[tbhp]
  \includegraphics[width=\linewidth]{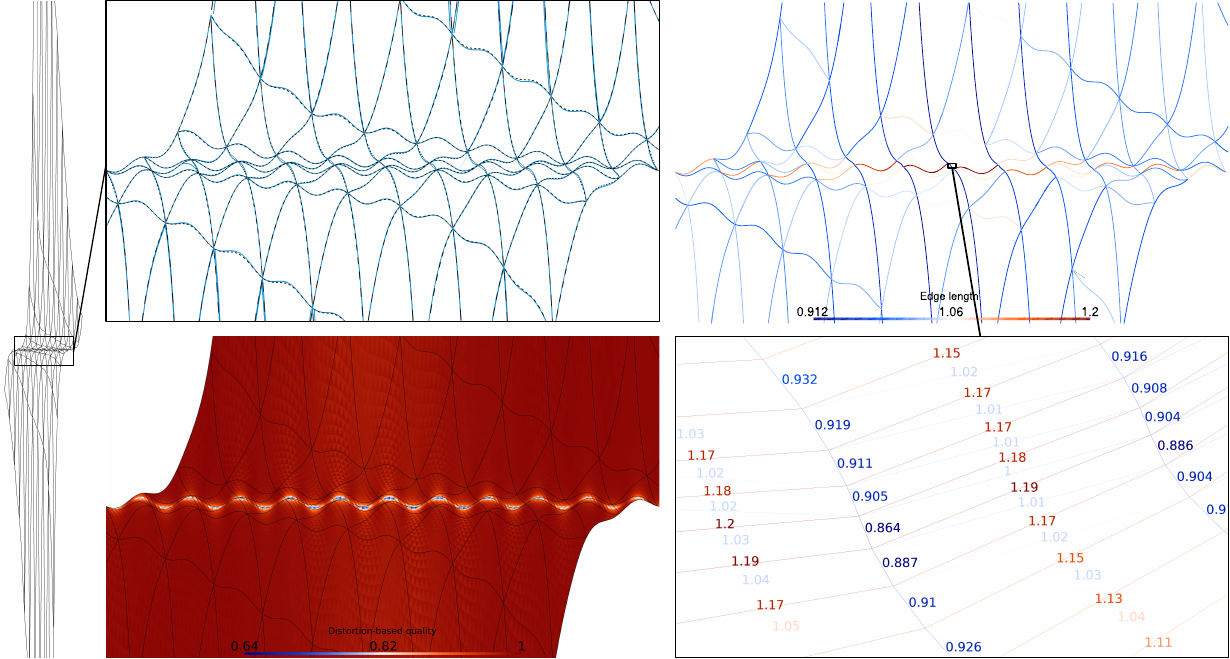}
%
  \includegraphics[width=\linewidth]{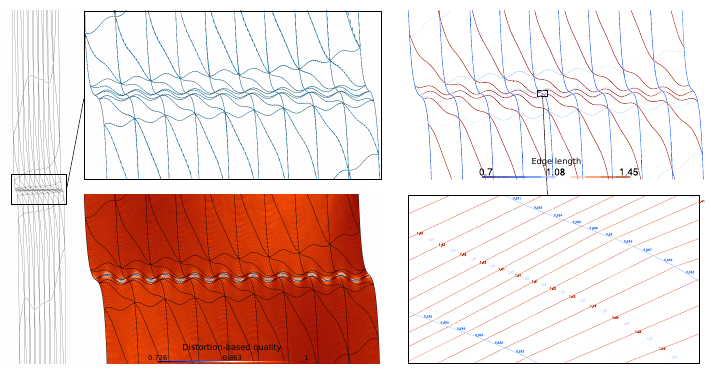}
  \caption[Isometric triangulation for the undulating boundary layer]{Triangulations for $S_6$
  optimized with respect to $\trieq$ (top, $N = 12$) and $\trirec$ (bottom, $N = 16$).
  First and third rows: curvilinear mesh with geodesics and edges length with respect to the metric.
  Second and fourth rows: distortion-based quality and edge length of some subtriangles.}
  \label{fig:wave16rec}
\end{figure}

\section{Conclusions}
\label{sec:conclusions}
The generalization to high-order of the notion of unit elements, a central component of the continuous mesh framework,
was the main focus of this paper.
Existing definitions of unit simplices were reviewed, and their limitations for high-order mesh generation were highlighted.
A new definition based on Riemannian isometries was introduced, encompassing existing ones by defining unit elements as isometries of the regular simplex $\keq$.
Isometric unit simplices are curved in general, but they only exist in the restrictive case of flat manifolds.
Proofs of concept of isometric triangulations were obtained in this idealized setting by optimizing a mesh at fixed connectivity.
For practical applications, the notion of quasi-unitness was extended to enable the generation of quasi-isometric simplices with respect to arbitrary metrics.

While the adequacy between a metric and a high-order mesh was addressed, the adequacy between solution and metric, that is, metric-based error estimates for high-order discretizations, remains to be treated.
Indeed, a complete methodology for high-order mesh adaptation requires both accurate error estimates translated into a Riemannian metric,
and a way of generating unit triangulations for this metric.
Interpolation error estimates on high-order meshes were already discussed in \cite{rocheryThesis, bawin2024metric},
and shall be the object of future work.

\appendix

\section*{Acknowledgments}
The authors would like to thank the Belgian Fund for Scientific Research (FRS-FNRS/Grant FRIA/FC 29571) for their support.
Financial support from the Simulation-based Engineering Science (Génie Par la Simulation) program funded through the CREATE program of
the Natural Sciences and Engineering Research Council (NSERC) of Canada is also gratefully acknowledged.

\bibliographystyle{siamplain}
\bibliography{references}

\begin{thebibliography}{10}

\bibitem{agarwal2012ceres}
{\sc S.~Agarwal and K.~Mierle}, {\em Ceres solver: Tutorial \& reference},
  Google Inc, 2 (2012), p.~8.

\bibitem{alauzet2010size}
{\sc F.~Alauzet}, {\em Size gradation control of anisotropic meshes}, Finite
  Elements in Analysis and Design, 46 (2010), pp.~181--202.

\bibitem{aparicio2019defining}
{\sc G.~Aparicio-Estrems, A.~Gargallo-Peir{\'o}, and X.~Roca}, {\em Defining a
  stretching and alignment aware quality measure for linear and curved 2d
  meshes}, 27th International Meshing Roundtable 27,  (2019), pp.~37--55.

\bibitem{aparicio2022high}
{\sc G.~Aparicio-Estrems, A.~Gargallo-Peir{\'o}, and X.~Roca}, {\em High-order
  metric interpolation for curved r-adaption by distortion minimization}, in
  Proceedings of the 2022 SIAM international meshing roundtable, Zenodo, 2022,
  pp.~1--12.

\bibitem{bawin2024metric}
{\sc A.~Bawin}, {\em Metric-based mesh adaptation with curvilinear triangles},
  PhD thesis, UCLouvain, 2024.

\bibitem{bawin2023optimally}
{\sc A.~Bawin, A.~Garon, and J.-F. Remacle}, {\em Optimally convergent
  isoparametric $\pdeux$ mesh generation}, in International Meshing Roundtable,
  Springer, 2023, pp.~373--395.

\bibitem{coulaud2016very}
{\sc O.~Coulaud and A.~Loseille}, {\em Very high order anisotropic metric-based
  mesh adaptation in 3d}, Procedia engineering, 163 (2016), pp.~353--365.

\bibitem{dobrzynski2012mmg3d}
{\sc C.~Dobrzynski}, {\em MMG3D: User guide}, Inria, 2012.
\newblock Available at \url{http://www.mmgtools.org/}.

\bibitem{freitag2002comparison}
{\sc L.~Freitag, P.~Knupp, T.~Munson, and S.~Shontz}, {\em A comparison of
  optimization software for mesh shape-quality improvement problems.}, tech.
  report, Argonne National Lab., IL (US), 2002.

\bibitem{lee2012smooth}
{\sc J.~M. Lee}, {\em Introduction to Smooth manifolds}, Springer, 2012.

\bibitem{lee2018introduction}
{\sc J.~M. Lee}, {\em Introduction to Riemannian manifolds}, vol.~2, Springer,
  2018.

\bibitem{loseille2008adaptation}
{\sc A.~Loseille}, {\em Adaptation de maillage anisotrope 3D multi-{\'e}chelles
  et cibl{\'e}e {\`a} une fonctionnelle pour la m{\'e}canique des fluides.
  Application {\`a} la pr{\'e}diction haute-fid{\'e}lit{\'e} du bang sonique.},
  PhD thesis, Universit{\'e} Pierre et Marie Curie-Paris VI, 2008.

\bibitem{loseille2011continuous}
{\sc A.~Loseille and F.~Alauzet}, {\em Continuous mesh framework part i:
  well-posed continuous interpolation error}, SIAM Journal on Numerical
  Analysis, 49 (2011), pp.~38--60.

\bibitem{loseille2011continuous2}
{\sc A.~Loseille and F.~Alauzet}, {\em Continuous mesh framework part ii:
  validations and applications}, SIAM Journal on Numerical Analysis, 49 (2011),
  pp.~61--86.

\bibitem{rocheryThesis}
{\sc L.~Rochery}, {\em High-order metric-based anisotropic mesh adaptation.},
  PhD thesis, Universit{\'e} Paris Saclay, 2023.

\bibitem{rochery2024generalized}
{\sc L.~Rochery, M.~C. Galbraith, D.~L. Darmofal, and S.~Allmaras}, {\em A
  generalized continuous mesh framework for explicit mesh curving}, in AIAA
  SCITECH 2024 Forum, 2024, p.~0787.

\bibitem{toulorge2013robust}
{\sc T.~Toulorge, C.~Geuzaine, J.-F. Remacle, and J.~Lambrechts}, {\em Robust
  untangling of curvilinear meshes}, Journal of Computational Physics, 254
  (2013), pp.~8--26.

\bibitem{zhang2018curvilinear}
{\sc R.~Zhang, A.~Johnen, and J.-F. Remacle}, {\em Curvilinear mesh
  adaptation}, in International meshing roundtable, Springer, 2018, pp.~57--69.

\bibitem{zhang2021generation}
{\sc R.~Zhang, A.~Johnen, J.-F. Remacle, F.~Henrotte, and A.~Bawin}, {\em The
  generation of unit p2 meshes: error estimation and mesh adaptation},
  International Meshing Roundtable (virtual),  (2021), pp.~1--13.

\end{thebibliography}
\end{document}